\newcommand\CA{{\mathcal A}}
\newcommand\CB{{\mathcal B}}
\newcommand\CIF{{\mathcal {IF}}} 
\newcommand\CIFM{{\mathcal {IFM}}}
\newcommand\BBC{{\mathbb C}}
\newcommand\BBK{{\mathbb K}}
\newcommand\BBZ{{\mathbb Z}}
\newcommand\codim{\operatorname{codim}}
\newcommand\Der{{\operatorname{Der}}}
\newcommand\pdeg{\operatorname{pdeg}}
\numberwithin{equation}{section}
\theoremstyle{plain}
\newtheorem{lemma}[equation]{Lemma}
\newtheorem{theorem}[equation]{Theorem}
\newtheorem{corollary}[equation]{Corollary}
\newtheorem{proposition}[equation]{Proposition}
\theoremstyle{definition}
\newtheorem{defn}[equation]{Definition}
\newtheorem{remark}[equation]{Remark}
\subjclass[2010]{20F55, 52C35, 14N20}
\begin{document}

%%%%%%%%%%%%%%%%%%%%%%%%%%%%%%%%%%%%%%%%%%%%%%%%%%%%%%%%%%%%%%%%%%%%%%
%%%%%%%%%%%%% top matter stuff
%%%%%%%%%%%%%%%%%%%%%%%%%%%%%%%%%%%%%%%%%%%%%%%%%%%%%%%%%%%%%%%%%%%%%%
\title[Inductively free Multiderivations of Braid arrangements]
{Inductively free Multiderivations \\ of Braid arrangements}

\author[H. Conrad]{Henning Conrad}
\author[G. R\"ohrle]{Gerhard R\"ohrle}
\address
{Fakult\"at f\"ur Mathematik,
Ruhr-Universit\"at Bochum,
D-44780 Bochum, Germany}
\email{henning.conrad@rub.de}
%\author[G. R\"ohrle]{Gerhard R\"ohrle}
%\address
%{Fakult\"at f\"ur Mathematik,
%Ruhr-Universit\"at Bochum,
%D-44780 Bochum, Germany}
\email{gerhard.roehrle@rub.de}

%\date{\today}
\keywords{Braid arrangement, Coxeter arrangement,
free arrangement, inductively free multiarrangement} 

\allowdisplaybreaks

\begin{abstract}
The reflection arrangement of a Coxeter group 
is a well known instance of a free hyperplane arrangement.
In 2002, Terao showed that
equipped with a constant multiplicity 
each such reflection arrangement
gives rise to a free multiarrangement.
In this note we show that this
multiarrangment  
satisfies the stronger property of 
inductive freeness in case the 
Coxeter group is of type $A$.
\end{abstract}

\maketitle

%%%%%%%%%%%%%%%%%%%%%%%%%%%%%%%%%%%%%%%%%%%%%%%%%%%%%%%%%%%%%%%%%%%%%%
%%%%%%%%%%%%% article body...
%%%%%%%%%%%%%%%%%%%%%%%%%%%%%%%%%%%%%%%%%%%%%%%%%%%%%%%%%%%%%%%%%%%%%%

%%%%%%%%%%%%%%%%%%%%%%%%%%%%%%%%%%%%%%%%%%%%%%%%%%%%%%%%%%%%%%%%%%%%%%
%%%%%%%%%%%%% \S1 Introduction
%%%%%%%%%%%%%%%%%%%%%%%%%%%%%%%%%%%%%%%%%%%%%%%%%%%%%%%%%%%%%%%%%%%%%%
\section{Introduction}

Arnold and independently Saito
proved that the 
reflection arrangements of Coxeter groups
are free, \cite[\S 6]{orlikterao:arrangements}. They
play a special role in the class of free hyperplane arrangements.
Terao's fundamental Addition-Deletion Theorem \cite{terao:freeI}
is a key tool in this theory. This leads to the 
important class of inductively free arrangements, 
\cite[\S 4]{orlikterao:arrangements}.
The significance of the latter lies in the fact that
within this class Terao's famous conjecture that 
freeness of an arrangement is combinatorial holds.

In his seminal work \cite{ziegler:multiarrangements}, Ziegler 
introduced the notion of multiarrangements and initiated the 
study of their freeness.  
In general, for a free hyperplane arrangement, 
an arbitrary multiplicity need not afford a 
free multiarrangement, 
e.g.\ see \cite[Ex.\ 14]{ziegler:multiarrangements}.

By constructing an explicit basis of the module of derivations, 
Terao showed in \cite{terao:multi} that
each Coxeter  arrangement  gives rise to a free multiarrangement
when endowed with a constant multiplicity.

In their ground breaking work 
\cite[Thm.\ 0.8]{abeteraowakefield:euler}, Abe, Terao and Wakefield
proved the 
Addition-Deletion Theorem for multiarrangements.
In analogy to the situation for simple arrangements
mentioned above, 
this naturally leads to the class of 
inductively free multiarrangements, 
see Definition \ref{def:indfree} below.
Since the computation of Euler multiplicities
is rather complicated, much less is known about 
this class of multiarrangments compared
to its cousin for simple arrangements as
indicated above.

Let $\CB_\ell$ be the braid arrangement in $\BBC^\ell$. It is the  
direct product of 
the empty 1-arrangement  $\Phi_1$
and the irreducible Coxeter arrangement $\CA_{\ell-1}$
of type $A_{\ell-1}$, \cite[\S 6.4]{orlikterao:arrangements}.
It follows from Definition \ref{def:indfree} and 
Theorem \ref{thm:products} that a multiplicity on 
$\CB_\ell$ is inductively free if and only if 
the corresponding multiplicity 
on the factor $\CA_{\ell-1}$  is inductively free.

Our main result shows that the irreducible Coxeter 
arrangement $\CA_{\ell-1}$ of type $A_{\ell-1}$ when equipped with a 
constant multiplicity 
is an inductively free multiarrangement.

\begin{theorem}
\label{thm:multibraid}
Let $\CB_\ell$ be the braid arrangement.
Then, for $m \in \BBZ_{\ge 1}$, 
the multiarrangement $(\CB_\ell, m)$ with defining polynomial
\[
Q(\CB_\ell, m) = \prod\limits_{1\le i<j\leq \ell}(x_i-x_j)^m
\]
is inductively free.
In particular, for the irreducible Coxeter arrangement 
$\CA_{\ell-1}$  of type $A_{\ell-1}$, the multiarrangement
$(\CA_{\ell-1}, m)$ is inductively free with exponents given by
\[
\exp (\CA_{\ell-1}, m) = 
\left\{\tfrac{m\ell}{2},\ldots,\tfrac{m\ell}{2}\right\} 
\]
for $m$ even, respectively 
\[
\exp (\CA_{\ell-1}, m) = 
\left\{\tfrac{(m-1)\ell}{2}+1,\tfrac{(m-1)\ell}{2}+2,
\ldots,\tfrac{(m-1)\ell}{2}+\ell-1\right\}
\]
for $m$ odd. 
\end{theorem}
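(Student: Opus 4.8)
The plan is to prove the result by induction on $\ell$ using the Addition-Deletion Theorem for multiarrangements of Abe--Terao--Wakefield. Since the braid arrangement $\CB_\ell$ is the product of the empty $1$-arrangement $\Phi_1$ with $\CA_{\ell-1}$, and by the remarks after Theorem \ref{thm:products} inductive freeness of the multiplicity passes between a product and its factors, it suffices to treat $(\CA_{\ell-1},m)$, or equivalently to work directly with $(\CB_\ell, m)$ whose hyperplanes are $H_{ij} = \ker(x_i - x_j)$ each carrying multiplicity $m$. The base case $\ell = 2$ is the single hyperplane $x_1 = x_2$ with multiplicity $m$: this is a $1$-multiarrangement (after restricting to the line), automatically free with exponent $\{m\}$, and trivially inductively free; one checks this matches the claimed formulas for $\ell = 2$.

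For the inductive step I would fix a hyperplane to delete, say $H = H_{\ell-1,\ell} = \ker(x_{\ell-1}-x_\ell)$, and consider the triple $(\CB_\ell, m)$, $(\CB_\ell', m')$, $(\CB_\ell'', m'')$ where $\CB_\ell'$ is $\CB_\ell$ with the multiplicity on $H$ lowered by one (so $m' = m$ on $H$ is replaced by $m-1$, all others unchanged) and $(\CB_\ell'', m'')$ is the Ziegler restriction to $H$ equipped with the Euler multiplicity $m^* = m''$. The restricted arrangement $\CB_\ell''$ is again (isomorphic to) a braid arrangement, namely $\CB_{\ell-1}$ on the coordinates $(x_1, \dots, x_{\ell-2}, x_{\ell-1} = x_\ell)$: the hyperplanes $H_{ij}$ with $i, j \le \ell-2$ restrict to the corresponding hyperplanes, while $H_{i,\ell-1}$ and $H_{i,\ell}$ both restrict to the same hyperplane $\ker(x_i - x_{\ell-1})$ in $H$. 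The key computation is to identify the Euler multiplicity $m^*$ on each restricted hyperplane: for a hyperplane coming from a single $H_{ij}$ with $i,j \le \ell-2$ it should be $m$, and for one coming from the coincidence of $H_{i,\ell-1}$ and $H_{i,\ell}$ it should be $2m$ (this is the typical "doubling" phenomenon for Euler multiplicities at a rank-two flat of type $A_2$, and is exactly the content one extracts from the local analysis in \cite{abeteraowakefield:euler}). One then needs to check whether this multiplicity $2m$ is constant — it is not literally the constant braid multiplicity unless all original multiplicities were equal, but here they are, so $\CB_\ell''$ with multiplicity $m^*$ is itself a constant-multiplicity braid-type arrangement (a product of braid arrangements on disjoint coordinate blocks, weighted by $m$ and $2m$), to which a suitably strengthened inductive hypothesis applies.

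The heart of the argument is therefore a careful bookkeeping of exponents so that the numerical hypotheses of the Addition-Deletion Theorem, namely $\exp(\CB_\ell', m') \cup \{|m| - 1 - ?\}$ type relations together with $\exp(\CB_\ell'', m'') = \exp(\CB_\ell', m') \setminus \{\text{something}\}$, hold at each stage; concretely I would verify that the multiset $\exp(\CB_\ell, m)$ predicted by the theorem, the multiset for $(\CB_\ell', m')$ (obtained by treating $H$ with multiplicity $m-1$, which is a mixed multiplicity and so requires the more general form of the formulas or a secondary induction on the multiplicity of $H$ running from $0$ up to $m$), and the multiset $\exp(\CB_\ell'', m^*)$ satisfy the Addition-Deletion numerology $b_k = b_k' + b_k''$ in the appropriate sense. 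I expect the main obstacle to be precisely this double induction: one cannot stay within constant multiplicities during the induction, so one must formulate and prove a more refined statement allowing the deleted hyperplane to carry an arbitrary multiplicity $0 \le r \le m$ while the rest stay at $m$, track the exponents of these intermediate mixed multiarrangements explicitly, and confirm that inductive freeness is preserved at every single addition step — including checking that the restricted multiarrangement, which may be a nontrivial product of braid pieces with two distinct constant weights, falls under the inductive hypothesis. Once the exponent formulas are pinned down and shown to interlock correctly, freeness of the intermediate terms follows from Abe--Terao--Wakefield and inductive freeness propagates up the chain by definition.
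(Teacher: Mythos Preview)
Your overall plan---induction on $\ell$ via the Addition-Deletion Theorem for multiarrangements---is the same as the paper's, but two concrete errors would derail the argument as written.

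First, your Euler multiplicity computation is wrong. At a rank-two flat $Y = \{x_i = x_{\ell-1} = x_\ell\}$ of type $A_2$ with all three hyperplanes carrying multiplicity $m$, the Euler multiplicity is not $2m$ but $\lfloor 3m/2 \rfloor$; this follows from Proposition~\ref{prop:Euler}(1) with $|\nu_Y| = 3m$. You appear to be confusing the Euler multiplicity with the naive Ziegler restriction multiplicity (sum of the multiplicities of the hyperplanes other than $H_0$ through $Y$). Consequently the restriction $(\CA'',\nu^*)$ is \emph{not} ``a product of braid pieces with two distinct constant weights'' as you write: it is a single braid arrangement of type $A_{\ell-2}$ in which the hyperplanes through the merged vertex carry weight roughly $3m/2$ while the others carry weight $m$. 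This is exactly the mixed multiplicity $(\CA_{\ell-2};m,q)$ of Corollary~\ref{cor:mixedmulti}, and the paper needs a separate preparatory result (Lemma~\ref{lem:mixed}) to establish inductive freeness for this family before the main induction can proceed.

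Second, your inner induction on the multiplicity $r$ of the single hyperplane $H_{\ell-1,\ell}$ has a bad base case. When $r=0$ you are left with $(\CA_{\ell-1}\setminus\{H_{\ell-1,\ell}\},m)$, which graph-theoretically corresponds to $K_\ell$ minus one edge; this is neither a braid arrangement of lower rank nor a product of such, so the outer induction hypothesis on $\ell$ gives you nothing. The paper sidesteps this by instead removing \emph{all} hyperplanes $\ker(x_i - x_\ell)$ for $1\le i\le \ell-1$, landing on $(\CA_{\ell-2},m)\times\Phi_1$, which is inductively free by the induction hypothesis together with Theorem~\ref{thm:products}. It then adds these $\ell-1$ hyperplanes back one multiplicity at a time over $m$ rounds; every restriction that appears along the way is of the form $(\CA_{\ell-2};m,q)$ and is handled by Lemma~\ref{lem:mixed}. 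Your intuition that a strengthened statement covering non-constant intermediate multiplicities is needed is exactly right---that strengthening is Lemma~\ref{lem:mixed}---but the chain of additions has to be chosen so that both the starting point and all restrictions stay inside a class you can control.
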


The exponents in 
Theorem \ref{thm:multibraid}
have been determined by Terao
\cite[Thm.\ 1.1]{terao:multi}.
Note that $\ell$ is the Coxeter number of 
the irreducible Coxeter group of type  $A_{\ell-1}$
and $\{1, 2, \ldots, \ell-1\}$ is its set
of exponents, see \cite[V 6.2]{bourbaki:groupes}.

As a consequence of our proof of 
Theorem \ref{thm:multibraid}, we also obtain that certain 
non-constant multiplicities 
give rise to inductively free multiarrangements of the 
braid arrangement. These occur as restrictions in our induction tables.

\begin{corollary}
\label{cor:mixedmulti}
Let $\CA$ be the irreducible Coxeter 
arrangement of type $A_{\ell-1}$.
Then, for $m,q \in \BBZ_{\ge 1}$, 
the multiarrangement $(\CA; m, q)$ with defining polynomial
\[
Q(\CA;m,q)=
\prod\limits_{2 \le j\leq \ell}(x_1-x_j)^{m+q}\prod\limits_{2 \le i<j\leq \ell}(x_i-x_j)^m
\]
is inductively free with
\[
\exp(\CA;m,q)=\left\{\tfrac{m\ell}{2}+q,\ldots,\tfrac{m\ell}{2}+q\right\}
\]
when $m$ is even or
\[
\exp(\CA;m,q)=\left\{\tfrac{(m-1)\ell}{2}+1+q,\tfrac{(m-1)\ell}{2}+2+q,
\ldots,\tfrac{(m-1)\ell}{2}+\ell-1+q\right\}
\]
when $m$ is odd. 
\end{corollary}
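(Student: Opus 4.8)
The plan is to read off the assertion directly from the induction tables constructed in the proof of Theorem \ref{thm:multibraid}, rather than to build a separate argument. Recall that the Addition-Deletion Theorem for multiarrangements \cite[Thm.\ 0.8]{abeteraowakefield:euler} relates a free multiarrangement $(\CA, m)$ to the deletion $(\CA', m')$ and the restriction $(\CA'', m^*)$, where $m^*$ is the Euler multiplicity; inductive freeness is then the statement that one can reach the empty arrangement by iterating such triples. In carrying out the induction for $(\CB_\ell, m)$, at each step we delete hyperplanes of the braid arrangement one at a time, and the restrictions $(\CA'', m^*)$ that arise are themselves braid arrangements (in one fewer variable) but now carrying a \emph{non-constant} Euler multiplicity: the hyperplanes through the point where the deleted factors meet pick up extra weight. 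Concretely, restricting the braid arrangement in $\BBC^\ell$ with a suitable partial multiplicity to the hyperplane $x_1 = x_2$ produces exactly a multiarrangement of the shape $(\CA; m,q)$ appearing in the Corollary — one distinguished parallel class of weight $m+q$ sitting inside an otherwise constant weight-$m$ braid arrangement.

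First I would make precise the bookkeeping: identify which intermediate multiarrangements in the induction table for Theorem \ref{thm:multibraid} are isomorphic (as multiarrangements, i.e.\ after a linear change of coordinates) to $(\CA; m,q)$ for the given $m,q \in \BBZ_{\ge 1}$. This is the crux, and it amounts to checking that the Euler multiplicity computed along the relevant branch of the table is precisely the one recorded in the statement of the Corollary. Since the whole table has already been shown to consist of inductively free multiarrangements in the proof of the theorem, every node in it is inductively free; hence so is any multiarrangement isomorphic to such a node, because inductive freeness is invariant under linear isomorphism. This gives inductive freeness of $(\CA; m,q)$ with no further work.

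Next I would pin down the exponents. One option is to quote \cite[Thm.\ 1.1]{terao:multi} together with the product-arrangement formula (Theorem \ref{thm:products}) and a direct computation; a cleaner option is to note that the Addition-Deletion Theorem forces $\exp(\CA; m,q) = \exp(\CA'', (m')^*) \cup \{\,|\CA| - \text{(something)}\,\}$ at the step where $(\CA; m,q)$ enters the table, so the exponents propagate from those of $(\CA_{\ell-1}, m)$ in Theorem \ref{thm:multibraid} by adding $q$ in the appropriate slot; comparing with the displayed formulas for $m$ even versus $m$ odd then finishes it. The main obstacle is purely organizational: one must set up the induction tables for Theorem \ref{thm:multibraid} so that the mixed multiplicities $(\CA; m,q)$ are visibly present as restrictions, and verify the Euler-multiplicity computation at those nodes — there is no new idea beyond the proof of the theorem, only the observation that a byproduct of that proof already contains the statement.
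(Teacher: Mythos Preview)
Your instinct that the corollary is a byproduct of the argument for Theorem \ref{thm:multibraid} is right, but the specific mechanism you propose has a gap. The restrictions $(\CA'',\nu^*)$ that actually arise in the main induction tables for the theorem (Tables \ref{indtable5}--\ref{indtable7}) are braid arrangements of type $A_{\ell-2}$ whose Euler multiplicity is of the form $(\CA_{\ell-2};m,q')$ only for $q' \le \lfloor m/2 \rfloor$: the text makes this explicit, the last restriction having multiplicity $[\tfrac{3m}{2},\ldots,\tfrac{3m}{2},m,\ldots,m]$ in the even case and $[\tfrac{3m-1}{2},\ldots,\tfrac{3m-1}{2},m,\ldots,m]$ in the odd case. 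Going up one rank does not help either, since the same bound on $q'$ applies there. So for $q > \lfloor m/2 \rfloor$ the multiarrangement $(\CA;m,q)$ simply does not occur as a restriction in the tables for the theorem, and your plan to ``read it off'' breaks down. The obstacle is not purely organizational.

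What the paper actually does is isolate a separate conditional statement, Lemma \ref{lem:mixed}: assuming $(\CA_{\ell-1},m)$ is inductively free, it builds its \emph{own} induction table (Tables \ref{indtable1}--\ref{indtable3}) starting from $(\CA_{\ell-1},m)$ and adding the hyperplanes $\ker(x_1-x_j)$ one multiplicity at a time, $q$ rounds in all, for \emph{arbitrary} $q$. The corollary then follows in one line by combining Theorem \ref{thm:multibraid} (which discharges the hypothesis) with Lemma \ref{lem:mixed}. Your idea about the exponents---that they propagate from $\exp(\CA_{\ell-1},m)$ by shifting each entry up by $q$---is exactly what this extra table establishes, and the Euler multiplicities along the way are handled by Proposition \ref{prop:Euler}. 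So the missing ingredient in your plan is precisely this additional induction chain; it uses no new ideas, but it is genuine extra content and must be carried out, not merely located inside the proof of the theorem.
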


It can be rather challenging  
to prove or disprove 
that a given 
arrangement is inductively free, e.g.\ see
\cite[Lem.\ 4.2]{amendhogeroehrle:indfree},
\cite[\S 5.2]{cuntz:indfree}, and  
\cite[Lem.\ 3.5]{hogeroehrle:indfree}.  
In principle, one might have to search through
all possible chains of free subarrangements.
We prove Theorem \ref{thm:multibraid} by 
exhibiting an induction table
of inductively free submultiarrangements,
see Remark \ref{rem:indtable}.

If $\CA$ is an inductively free simple arrangement, 
then for $m \in \BBZ_{\ge 1}$
the multiarrangement $(\CA, m)$
need not be inductively free in general
(indeed it need not even be free),
e.g.\ see \cite[Ex.\ 14]{ziegler:multiarrangements}.
So the situation for Coxeter groups as suggested by
Theorem \ref{thm:multibraid} is very special.

In \cite[Thm.\ 0.3]{abenudanumata}, 
Abe, Nuda and Numata determine a large class of 
free multiplicities of the braid arrangement.
The approach in \cite{abenudanumata} is by means of 
graphic arrangements.
In the notation of \emph{loc.~cit.}, if 
$m$ is even and $G$ is the empty graph 
and the parameters there are chosen to be
$k = m/2$, $n_1 = q$ and $n_2 = \ldots = n_\ell = 0$, 
repectively 
$m$ is odd and $G$ is the 
complete graph on $\ell$ vertices with only positive signs
and the parameters are chosen to be
$k = (m-1)/2$, $n_1 = q$ and $n_2 = \ldots = n_\ell = 0$, 
one recovers the 
freeness statements for the given 
multiplicities
in Corollary \ref{cor:mixedmulti}.
Moreover, since the proof of \cite[Thm.\ 0.3]{abenudanumata}
is based on the Addition-Deletion Theorem \ref{thm:add-del}
for multiarrangements, all the free multiarrangements
considered there are also inductively free, although 
this is not explicitly stated in \emph{loc.~cit}.

The graph theoretic approach employed 
in \cite{abenudanumata} is restricted to the braid arrangement.
In contrast, our treatment applies in the general setting 
of multiarrangements as such 
and in principle allows to be generalized to other Coxeter groups,
albeit general reflection arrangements are considerably 
more complicated than those of type $A$.
Therefore, it is likely that the 
reflection arrangement of any Coxeter group with 
constant multiplicity is also inductively free.
It is also natural to investigate inductive freeness for 
non-constant free multiplicities
of Coxeter arrangements, 
cf.~\cite{abeyoshinaga:quasi}, 
\cite{abeteraowakamiko:equivariant}.

We refer to \cite[Rem.\ 1.6]{terao:multi}
and \cite{yoshinaga:free04}
for the connection 
of the question of freeness of a
Coxeter arrangement $\CA$ endowed with a constant multiplicity 
and the question of freeness of extended Shi and 
extended Catalan arrangements. The latter was a 
conjecture of Edelman and Reiner and is
proved by Yoshinaga in \cite{yoshinaga:free04}.

\section{Recollection and Preliminaries}

\subsection{Hyperplane Arrangements}
\label{ssect:hyper}
Let $V = \BBK^\ell$ 
be an $\ell$-dimensional $\BBK$-vector space.
A \emph{hyperplane arrangement} is a pair
$(\CA, V)$, where $\CA$ is a finite collection of hyperplanes in $V$.
Usually, we simply write $\CA$ in place of $(\CA, V)$.
We write $|\CA|$ for the number of hyperplanes in $\CA$.
The empty arrangement in $V$ is denoted by $\Phi_\ell$.

The \emph{lattice} $L(\CA)$ of $\CA$ is the set of subspaces of $V$ of
the form $H_1\cap \dotsm \cap H_i$ where $\{ H_1, \ldots, H_i\}$ is a subset
of $\CA$. 
For $X \in L(\CA)$, we have two associated arrangements, 
firstly
$\CA_X :=\{H \in \CA \mid X \subseteq H\} \subseteq \CA$,
the \emph{localization of $\CA$ at $X$}, 
and secondly, 
the \emph{restriction of $\CA$ to $X$}, $(\CA^X,X)$, where 
$\CA^X := \{ X \cap H \mid H \in \CA \setminus \CA_X\}$.
Note that $V$ belongs to $L(\CA)$
as the intersection of the empty 
collection of hyperplanes and $\CA^V = \CA$. 

If $0 \in H$ for each $H$ in $\CA$, then 
$\CA$ is called \emph{central}.
If $\CA$ is central, then  
$T_\CA := \cap_{H \in \CA} H$ 
is the \emph{center} of $\CA$.
We have a \emph{rank} function on $L(\CA)$: $r(X) := \codim_V(X)$.
The \emph{rank} $r := r(\CA)$ of $\CA$ 
is the rank of $T_\CA$.

\subsection{Free Hyperplane Arrangements}
Let $S = S(V^*)$ be the symmetric algebra of the dual space $V^*$ of $V$.
If $x_1, \ldots , x_\ell$ is a basis of $V^*$, then we identify $S$ with 
the polynomial ring $\BBK[x_1, \ldots , x_\ell]$.
Letting $S_p$ denote the $\BBK$-subspace of $S$
consisting of the homogeneous polynomials of degree $p$ (along with $0$),
$S$ is naturally $\BBZ$-graded: $S = \oplus_{p \in \BBZ}S_p$, where
$S_p = 0$ in case $p < 0$.

Let $\Der(S)$ be the $S$-module of algebraic $\BBK$-derivations of $S$.
Using the $\BBZ$-grading on $S$, $\Der(S)$ becomes a graded $S$-module.
For $i = 1, \ldots, \ell$, 
let $D_i := \partial/\partial x_i$ be the usual derivation of $S$.
Then $D_1, \ldots, D_\ell$ is an $S$-basis of $\Der(S)$.
We say that $\theta \in \Der(S)$ is 
\emph{homogeneous of polynomial degree p}
provided 
$\theta = \sum_{i=1}^\ell f_i D_i$, 
where $f_i$ is either $0$ or homogeneous of degree $p$
for each $1 \le i \le \ell$.
In this case we write $\pdeg \theta = p$.

Let $\CA$ be an arrangement in $V$. 
Then for $H \in \CA$ we fix $\alpha_H \in V^*$ with
$H = \ker(\alpha_H)$.
The \emph{defining polynomial} $Q(\CA)$ of $\CA$ is given by 
$Q(\CA) := \prod_{H \in \CA} \alpha_H \in S$.

The \emph{module of $\CA$-derivations} of $\CA$ is 
defined by 
\[
D(\CA) := \{\theta \in \Der(S) \mid \theta(\alpha_H) \in \alpha_H S
\text{ for each } H \in \CA \} .
\]
We say that $\CA$ is \emph{free} if the module of $\CA$-derivations
$D(\CA)$ is a free $S$-module.

With the $\BBZ$-grading of $\Der(S)$, 
also $D(\CA)$ 
becomes a graded $S$-module,
\cite[Prop.\ 4.10]{orlikterao:arrangements}.
If $\CA$ is a free arrangement, then the $S$-module 
$D(\CA)$ admits a basis of $\ell$ homogeneous derivations, 
say $\theta_1, \ldots, \theta_\ell$, \cite[Prop.\ 4.18]{orlikterao:arrangements}.
While the $\theta_i$'s are not unique, their polynomial 
degrees $\pdeg \theta_i$ 
are unique (up to ordering). This multiset is the set of 
\emph{exponents} of the free arrangement $\CA$
and is denoted by $\exp \CA$.

\subsection{Multiarrangements}
\label{ssec:multi}
A \emph{multiarrangement}  is a pair
$(\CA, \nu)$ consisting of a hyperplane arrangement $\CA$ and a 
\emph{multiplicity} function
$\nu : \CA \to \BBZ_{\ge 0}$ associating 
to each hyperplane $H$ in $\CA$ a non-negative integer $\nu(H)$.
Alternately, the multiarrangement $(\CA, \nu)$ can also be thought of as
the multiset of hyperplanes
\[
(\CA, \nu) = \{H^{\nu(H)} \mid H \in \CA\}.
\]
We say that $\nu$ is a \emph{constant} multiplicity
provided there is some fixed $m \in \BBZ_{\ge 0}$ so that
$\nu(H) = m$ for every $H \in \CA$.
In that case we also 
say that $\nu$ is constant of \emph{weight} $m$ and 
frequently write 
$(\CA, m)$ in place of $(\CA, \nu)$.

The \emph{order} of the multiarrangement $(\CA, \nu)$ 
is the cardinality 
of the multiset $(\CA, \nu)$; we write 
$|\nu| := |(\CA, \nu)| = \sum_{H \in \CA} \nu(H)$.
For a multiarrangement $(\CA, \nu)$, the underlying 
arrangement $\CA$ is sometimes called the associated 
\emph{simple} arrangement, and so $(\CA, \nu)$ itself is  
simple if and only if $\nu(H) = 1$ for each $H \in \CA$. 

Let $\CA = \{H_1 , H_2, \ldots \}$
be a simple arrangement.
Then sometimes it 
is convenient to denote a multiplicity function $\nu$
on $\CA$ simply by the ordered tuple of its values 
$[\nu(H_1), \nu(H_2), \ldots ]$.

\begin{defn}
\label{def:submulti}
Let $\nu_i$ be a multiplicity of $\CA_i$ for $ i = 1,2$.
When viewed as multisets, suppose that 
$(\CA_1, \nu_1)$ is a subset of $(\CA_2, \nu_2)$.
Then we say that $(\CA_1, \nu_1)$ is a 
\emph{submultiarrangement} of $(\CA_2, \nu_2)$ and write 
$(\CA_1, \nu_1) \subseteq (\CA_2, \nu_2)$,
i.e.\ we have $\nu_1(H) \le \nu_2(H)$ for each $H \in \CA_1$.
\end{defn}

\begin{defn}
\label{def:localization}
Let $(\CA, \nu)$ be a multiarrangement in $V$ and let 
$X$ be in the lattice of $\CA$. The 
\emph{localization of $(\CA, \nu)$ at $X$} is $(\CA_X, \nu_X)$,
where $\nu_X = \nu |_{\CA_X}$.
\end{defn}

\subsection{Freeness of  multiarrangements}

Following Ziegler \cite{ziegler:multiarrangements},
we extend the notion of freeness to multiarrangements as follows.
The \emph{defining polynomial} $Q(\CA, \nu)$ 
of the multiarrangement $(\CA, \nu)$ is given by 
\[
Q(\CA, \nu) := \prod_{H \in \CA} \alpha_H^{\nu(H)},
\] 
a polynomial of degree $|\nu|$ in $S$.

The \emph{module of $\CA$-derivations} of $(\CA, \nu)$ is 
defined by 
\[
D(\CA, \nu) := \{\theta \in \Der(S) \mid \theta(\alpha_H) \in \alpha_H^{\nu(H)} S 
\text{ for each } H \in \CA\}.
\]
We say that $(\CA, \nu)$ is \emph{free} if 
$D(\CA, \nu)$ is a free $S$-module, 
\cite[Def.\ 6]{ziegler:multiarrangements}.

As in the case of simple arrangements,
$D(\CA, \nu)$ is a $\BBZ$-graded $S$-module and 
thus, if $(\CA, \nu)$ is free, there is a 
homogeneous basis $\theta_1, \ldots, \theta_\ell$ of $D(\CA, \nu)$.
The multiset of the unique polynomial degrees $\pdeg \theta_i$ 
forms the set of \emph{exponents} of the free multiarrangement $(\CA, \nu)$
and is denoted by $\exp (\CA, \nu)$.
It follows from Ziegler's analogue of Saito's criterion 
\cite[Thm.\ 8]{ziegler:multiarrangements} that 
$\sum \pdeg \theta_i = \deg Q(\CA, \nu) = |\nu|$. 

\begin{remark}
\label{rem:freeproducts}
A product of multiarrangements is free if and only if each factor is free:
using \cite[Lem.\ 1.3]{abeteraowakefield:euler}, 
the proof of \cite[Thm.\ 4.28]{orlikterao:arrangements}
readily extends to multiarrangements, thanks to 
Ziegler's analogue of Saito's criterion 
\cite[Thm.\ 8]{ziegler:multiarrangements}.
Moreover, in that case the set of exponents 
of the product is the union of the sets of exponents of the factors.
\end{remark}

\subsection{The Addition-Deletion Theorem for Multiarrangements}

We recall the construction from \cite{abeteraowakefield:euler}.

\begin{defn}
\label{def:Euler}
Let $(\CA, \nu) \ne \Phi_\ell$ be a multiarrangement. Fix $H_0$ in $\CA$.
We define the \emph{deletion}  $(\CA', \nu')$ and \emph{restriction} $(\CA'', \nu^*)$
of $(\CA, \nu)$ with respect to $H_0$ as follows.
If $\nu(H_0) = 1$, then set $\CA' = \CA \setminus \{H_0\}$
and define $\nu'(H) = \nu(H)$ for all $H \in \CA'$.
If $\nu(H_0) > 1$, then set $\CA' = \CA$
and define $\nu'(H_0) = \nu(H_0)-1$ and
$\nu'(H) = \nu(H)$ for all $H \ne H_0$.

Let $\CA'' = \{ H \cap H_0 \mid H \in \CA \setminus \{H_0\}\ \}$.
The \emph{Euler multiplicity} $\nu^*$ of $\CA''$ is defined as follows.
Let $Y \in \CA''$. Since the localization $\CA_Y$ is of rank $2$, the
multiarrangement $(\CA_Y, \nu_Y)$ is free, 
\cite[Cor.\ 7]{ziegler:multiarrangements}. 
According to 
\cite[Prop.\ 2.1]{abeteraowakefield:euler},
the module of derivations 
$D(\CA_Y, \nu_Y)$ admits a particular homogeneous basis
$\{\theta_Y, \psi_Y, D_3, \ldots, D_\ell\}$,
where $\theta_Y$ is identified by the 
property that $\theta_Y \notin \alpha_0 \Der(S)$
and $\psi_Y$ by the property that $\psi_Y\in \alpha_0 \Der(S)$,
where $H_0 = \ker \alpha_0$.
Then the Euler multiplicity $\nu^*$ is defined
on $Y$ as $\nu^*(Y) = \pdeg \theta_Y$.

We refer to $(\CA, \nu), (\CA', \nu')$ and $(\CA'', \nu^*)$ 
as the \emph{triple} of $(\CA, \nu)$ with respect to $H_0$. 
\end{defn}

\begin{theorem}
[{\cite[Thm.\ 0.8]{abeteraowakefield:euler}}
Addition-Deletion-Theorem for Multiarrangements]
\label{thm:add-del}
Suppose that $(\CA, \nu) \ne \Phi_\ell$.
Fix $H_0$ in $\CA$ and 
let  $(\CA, \nu), (\CA', \nu')$ and  $(\CA'', \nu^*)$ be the triple with respect to $H_0$. 
Then any  two of the following statements imply the third:
\begin{itemize}
\item[(i)] $(\CA, \nu)$ is free with $\exp (\CA, \nu) = \{ b_1, \ldots , b_{\ell -1}, b_\ell\}$;
\item[(ii)] $(\CA', \nu')$ is free with $\exp (\CA', \nu') = \{ b_1, \ldots , b_{\ell -1}, b_\ell-1\}$;
\item[(iii)] $(\CA'', \nu^*)$ is free with $\exp (\CA'', \nu^*) = \{ b_1, \ldots , b_{\ell -1}\}$.
\end{itemize}
\end{theorem}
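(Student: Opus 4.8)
The plan is to deduce all three implications from a single fundamental exact sequence together with Ziegler's analogue of Saito's criterion \cite[Thm.\ 8]{ziegler:multiarrangements}. Write $\alpha_0 = \alpha_{H_0}$, set $S'' := S/\alpha_0 S$ (the coordinate ring of $H_0$), and let $d_0 = \nu(H_0)$. Since $\theta(\alpha_0) \in \alpha_0 S$ for every $\theta \in D(\CA,\nu)$, each such $\theta$ preserves the ideal $(\alpha_0)$ and hence induces a derivation $\bar\theta$ of $S''$; this defines a degree-preserving restriction map $\rho \colon D(\CA,\nu) \to \Der(S'')$. First I would record the two bookkeeping identities $Q(\CA,\nu) = \alpha_0\, Q(\CA',\nu')$ and $|\nu| = |\nu'| + 1$, and verify directly from the definitions that $\ker\rho = \alpha_0\, D(\CA',\nu')$: indeed $\alpha_0\theta' \in D(\CA,\nu)$ holds precisely when $\theta' \in D(\CA',\nu')$, because multiplying by $\alpha_0$ absorbs exactly one factor of $\alpha_0$ in the condition at $H_0$ and none at the other hyperplanes. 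This yields the left-exact sequence
\[
0 \longrightarrow D(\CA',\nu')(-1) \xrightarrow{\ \alpha_0\ } D(\CA,\nu) \xrightarrow{\ \rho\ } \Der(S'') ,
\]
where the shift $(-1)$ records that multiplication by $\alpha_0$ raises polynomial degree by one.

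The crux --- and the step I expect to be the main obstacle --- is to show that $\operatorname{im}\rho \subseteq D(\CA'',\nu^*)$, i.e.\ that the restriction $\bar\theta$ satisfies the divisibility prescribed by the Euler multiplicity at every $Y \in \CA''$. Because membership in $D(\CA'',\nu^*)$ is a condition imposed one flat $Y$ at a time, and $\nu^*(Y)$ is defined purely in terms of the rank-$2$ localization $(\CA_Y,\nu_Y)$ (Definition \ref{def:Euler}), this reduces to a local rank-$2$ computation. There I would invoke the distinguished basis $\{\theta_Y,\psi_Y,D_3,\dots,D_\ell\}$ of $D(\CA_Y,\nu_Y)$ from \cite[Prop.\ 2.1]{abeteraowakefield:euler}: since $\psi_Y \in \alpha_0\Der(S)$ we have $\bar\psi_Y = 0$, so $\operatorname{im}\rho$ is locally generated by $\bar\theta_Y$, which is homogeneous of polynomial degree $\pdeg\theta_Y = \nu^*(Y)$. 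As $Y$ is a single point in the line $H_0$ modulo the center, homogeneity of degree $\nu^*(Y)$ forces the relevant coefficient of $\bar\theta_Y$ to be divisible by $\alpha_Y^{\nu^*(Y)}$; this is exactly the property that makes the Euler multiplicity the correct restriction multiplicity, and it is the heart of the matter.

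With the exact sequence in hand, the equivalences follow by the familiar dichotomy: when $\rho$ is surjective the sequence gives the graded identity
\[
\operatorname{Hilb}(D(\CA,\nu)) = t\cdot\operatorname{Hilb}(D(\CA',\nu')) + \operatorname{Hilb}(D(\CA'',\nu^*)) ,
\]
and a short computation confirms that this is consistent precisely with the exponent shift $\{b_1,\dots,b_{\ell-1},b_\ell\}$, $\{b_1,\dots,b_{\ell-1},b_\ell-1\}$, $\{b_1,\dots,b_{\ell-1}\}$ in (i)--(iii). To turn this into the actual implications I would argue via Ziegler's criterion rather than Hilbert series alone. For deletion ((i) and (ii) give (iii)) I would apply $\rho$ to a homogeneous basis $\theta_1,\dots,\theta_\ell$ of $D(\CA,\nu)$ and show that $\rho(\theta_1),\dots,\rho(\theta_{\ell-1})$ form a basis of $D(\CA'',\nu^*)$; for addition ((ii) and (iii) give (i)) I would assemble the candidate family $\{\eta_1,\dots,\eta_{\ell-1},\alpha_0\varphi_\ell\}$ in $D(\CA,\nu)$, where the $\eta_j$ are lifts along $\rho$ of a basis of $D(\CA'',\nu^*)$ and $\varphi_\ell$ is the top-degree member of a basis of $D(\CA',\nu')$; the remaining combination is handled symmetrically.

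In every case the verification comes down to evaluating one determinant $\det\big(\theta_i(x_j)\big)$ and checking, via Ziegler's criterion, that it equals a nonzero scalar multiple of the appropriate defining polynomial. The technical core here is a determinant factorization under restriction to $H_0$: using $Q(\CA,\nu) = \alpha_0\, Q(\CA',\nu')$ one extracts the exact power of $\alpha_0$ from the $\ell\times\ell$ determinant and identifies the restricted $(\ell-1)\times(\ell-1)$ minor with $Q(\CA'',\nu^*)$ up to a scalar, the degree accounting $b_\ell = |\nu| - |\nu^*|$ forced by the hypotheses guaranteeing that the powers match. Nonvanishing of this determinant is simultaneously what certifies freeness and what forces the surjectivity of $\rho$ needed above, so the two potential gaps --- the containment $\operatorname{im}\rho\subseteq D(\CA'',\nu^*)$ and the surjectivity of $\rho$ --- are closed by the local rank-$2$ analysis and the determinant computation, respectively.
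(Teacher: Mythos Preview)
The paper does not prove this theorem: it is quoted verbatim from \cite[Thm.\ 0.8]{abeteraowakefield:euler} and used as a black box throughout \S\ref{sec:proof}. So there is no ``paper's own proof'' to compare against; what you have sketched is essentially the argument of Abe--Terao--Wakefield themselves, built around the exact sequence
\[
0 \longrightarrow D(\CA',\nu')(-1) \xrightarrow{\ \alpha_0\ } D(\CA,\nu) \xrightarrow{\ \rho\ } D(\CA'',\nu^*)
\]
and Ziegler's Saito criterion.

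Your outline is structurally sound, and you have correctly located the crux in the containment $\operatorname{im}\rho \subseteq D(\CA'',\nu^*)$ and its reduction to rank~$2$. One point deserves more care: the sentence ``homogeneity of degree $\nu^*(Y)$ forces the relevant coefficient of $\bar\theta_Y$ to be divisible by $\alpha_Y^{\nu^*(Y)}$'' is not literally correct as stated---homogeneity alone does not give divisibility. What actually happens in the rank-$2$ localization is that after choosing coordinates with $\alpha_0 = x_1$, the restriction $\bar\theta_Y$ lives in $S'' \cong \BBK[x_2]$ tensored with the tangent directions, its tangential component is a one-variable polynomial of degree $\nu^*(Y)$, and the defining form $\alpha_Y$ of $Y \in \CA''$ is (up to scalar) $x_2$; so the coefficient is a scalar multiple of $x_2^{\nu^*(Y)}$ simply because the polynomial ring in one variable has no other homogeneous elements of that degree. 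You should make the passage to one variable explicit. A second, smaller gap: in the addition step you need to check that lifts $\eta_j \in D(\CA,\nu)$ of a basis of $D(\CA'',\nu^*)$ actually exist with the correct degrees; this is not automatic from left-exactness and in \cite{abeteraowakefield:euler} is handled by an explicit construction using the freeness of $(\CA',\nu')$.
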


The following is part of 
\cite[Prop.\ 4.1]{abeteraowakefield:euler}
relevant for our purposes.

\begin{proposition}
\label{prop:Euler}
Let $H_0\in\CA$,  $\CA'' = \CA^{H_0}$ and let  $X\in\CA''$.
Let $\nu$ be a multiplicity on $\CA$.
Let $\nu_0 = \nu(H_0)$. Further let $k=|\CA_X|$ and $\nu_1=\max\{\nu(H) \mid H\in\CA_X\setminus\{H_0\}\}$.
\begin{itemize}
\item[(1)] If $k=3,\ 2\nu_0\le |\nu_X|$, and $2\nu_1\le |\nu_X|$, then 
$\nu^*(X)=\left\lfloor{|\nu_X|}/{2}\right\rfloor$.
\item[(2)] If $k=2$, then $\nu^*(X) = \nu_1$.
\item[(3)] If $2\nu_1\ge |\nu_X|-1$, then $\nu^*(X) =\nu_1$.
\end{itemize}
\end{proposition}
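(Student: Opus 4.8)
The plan is to reduce everything to the rank-two localization $(\CA_X,\nu_X)$ and to compute a single quantity there. Since $X\in\CA''=\CA^{H_0}$ has $\codim_V X=2$, the arrangement $\CA_X$ has rank $2$, and by Definition \ref{def:Euler} the Euler multiplicity is $\nu^*(X)=\pdeg\theta_X$, where $\{\theta_X,\psi_X,D_3,\dots,D_\ell\}$ is the homogeneous basis of $D(\CA_X,\nu_X)$ supplied by \cite[Prop.\ 2.1]{abeteraowakefield:euler}, distinguished by $\theta_X\notin\alpha_0\Der(S)$ and $\psi_X\in\alpha_0\Der(S)$, with $H_0=\ker\alpha_0$. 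Working with the essential rank-two multiarrangement (the directions $D_3,\dots,D_\ell$ carry exponent $0$), Ziegler's analogue of Saito's criterion \cite[Thm.\ 8]{ziegler:multiarrangements} gives $\pdeg\theta_X+\pdeg\psi_X=|\nu_X|$. Hence the whole task is to pin down $\pdeg\theta_X$ among the two exponents.

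I would isolate two devices. First, a divisibility characterization: from the defining conditions, using that $\alpha_0$ and $\alpha_H$ are coprime for $H\neq H_0$, one gets
\[
\{\theta\in D(\CA_X,\nu_X)\mid \theta\in\alpha_0\Der(S)\}=\alpha_0\cdot D(\CA_X,\nu_X^-),
\]
where $\nu_X^-$ agrees with $\nu_X$ except that it lowers the multiplicity of $H_0$ by one. Consequently $\pdeg\psi_X\ge 1+d_1^-$, where $d_1^-$ is the smaller exponent of the deletion $(\CA_X,\nu_X^-)$, and therefore $\pdeg\theta_X\le |\nu_X|-1-d_1^-$. Second, a tangency bound: if $\theta_X(\alpha_L)=0$ for some line $L=\ker\alpha_L$, then in the rank-two picture $\theta_X$ is a polynomial multiple of the constant derivation tangent to $L$, and the conditions at the remaining lines force $\prod_{H\neq L}\alpha_H^{\nu(H)}$ to divide that polynomial; in particular $\alpha_0^{\nu_0}$ does, so $\theta_X\in\alpha_0\Der(S)$, a contradiction. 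Thus $\theta_X(\alpha_L)\neq 0$ for every $L\neq H_0$, giving the lower bound $\pdeg\theta_X\ge\nu(L)$.

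With these in hand I would treat the three cases. For (2) a direct computation gives the basis $\{\alpha_0^{\nu_0}\partial,\ \alpha_1^{\nu_1}\partial'\}$ with $\partial,\partial'$ tangent to the two lines; the degree-$\nu_1$ generator is not divisible by $\alpha_0$, whence $\nu^*(X)=\nu_1$. For (3) one first checks that $k\ge 3$ together with $2\nu_1\ge|\nu_X|-1$ forces $\nu_1\ge\nu_0$, so a line $L\neq H_0$ of maximal multiplicity $\nu_1$ exists; the tangency bound gives $\pdeg\theta_X\ge\nu_1$. For the matching upper bound, the deletion has maximal multiplicity $\nu_1$ and $2\nu_1\ge|\nu_X|-1=|\nu_X^-|$, so it is unbalanced with $d_1^-=|\nu_X^-|-\nu_1$; then $\pdeg\theta_X\le|\nu_X|-1-d_1^-=\nu_1$, and $\nu^*(X)=\nu_1$. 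For (1), with $k=3$ and $2\nu_0,2\nu_1\le|\nu_X|$, the arrangement is balanced with $\exp(\CA_X,\nu_X)=\{\lfloor|\nu_X|/2\rfloor,\lceil|\nu_X|/2\rceil\}$; if $|\nu_X|$ is even the two exponents coincide and $\nu^*(X)=|\nu_X|/2$ at once, while if $|\nu_X|$ is odd every nonzero element has degree at least the smaller exponent $\lfloor|\nu_X|/2\rfloor$, and the hypotheses ensure the deletion is again balanced of even order $|\nu_X|-1$ with $d_1^-=(|\nu_X|-1)/2$, so the divisibility bound yields $\pdeg\theta_X\le|\nu_X|-1-d_1^-=\lfloor|\nu_X|/2\rfloor$; thus $\nu^*(X)=\lfloor|\nu_X|/2\rfloor$.

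The main obstacle is the balanced regime underlying (1): establishing that three concurrent lines with balanced multiplicities have the evenly split exponents, and that passing to the deletion preserves this, including the degenerate configurations (most notably $\nu_0=1$, where $H_0$ vanishes under deletion, and coincidences among the non-$H_0$ multiplicities). This is exactly where the inequalities $2\nu_0\le|\nu_X|$ and $2\nu_1\le|\nu_X|$ are indispensable: they are precisely the conditions guaranteeing that no line dominates and that the deletion stays balanced, which makes the degree comparison $1+d_1^->\lfloor|\nu_X|/2\rfloor$ decisive in the odd case. The remaining ingredients—the explicit Saito-criterion computations producing the rank-two exponents and the elementary divisibility verifications behind the two devices—are routine.
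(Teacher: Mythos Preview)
The paper does not prove this proposition at all: it is quoted verbatim as ``part of \cite[Prop.\ 4.1]{abeteraowakefield:euler} relevant for our purposes,'' with no argument supplied. So there is no in-paper proof to compare your attempt against; any correct argument you give is already more than the paper offers.

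On its own merits your sketch is sound and follows the natural line (and, in essence, the line of the cited source). The two devices you isolate are exactly the right ones: the identification $\{\theta\in D(\CA_X,\nu_X)\mid \theta\in\alpha_0\Der(S)\}=\alpha_0\cdot D(\CA_X,\nu_X^-)$ gives the upper bound on $\pdeg\theta_X$, and the observation that $\theta_X(\alpha_L)\neq 0$ for $L\neq H_0$ gives the lower bound $\pdeg\theta_X\ge\nu_1$. Your treatment of (2) via the explicit basis $\{\alpha_0^{\nu_0}\partial_0,\alpha_1^{\nu_1}\partial_1\}$ is correct, and your handling of (3) is fine once one notes that $k\ge 3$ (together with positive multiplicities) and $2\nu_1\ge|\nu_X|-1$ indeed force $\nu_1\ge\nu_0$, so that the dominant line lies off $H_0$ and the deletion is unbalanced with smaller exponent $|\nu_X^-|-\nu_1$. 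For (1) your parity split is clean, and your verification that the deletion remains balanced when $|\nu_X|$ is odd (including the boundary case $\nu_0=1$, where the deletion drops to two lines and one checks $\nu_1-\nu_2\le 1$ so $d_1^-=\nu_2=\lfloor|\nu_X|/2\rfloor$) goes through. The only genuine external input you invoke is the classical rank-two formula $\exp=\{\lfloor|\nu_X|/2\rfloor,\lceil|\nu_X|/2\rceil\}$ for three balanced lines; this is standard (Wakamiko, or a direct Saito-criterion computation) and is precisely what \cite{abeteraowakefield:euler} also relies on.
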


\subsection{Inductive Freeness for Multiarrangements}
\label{subsec:indutive}
As in the simple case, Theorem \ref{thm:add-del} motivates 
the notion of inductive freeness. 

\begin{defn}[{\cite[Def.\ 0.9]{abeteraowakefield:euler}}]
\label{def:indfree}
The class $\CIFM$ of \emph{inductively free} multiarrangements 
is the smallest class of arrangements subject to
\begin{itemize}
\item[(i)] $\Phi_\ell \in \CIFM$ for each $\ell \ge 0$;
\item[(ii)] for a multiarrangement $(\CA, \nu)$, if there exists a hyperplane $H_0 \in \CA$ such that both
$(\CA', \nu')$ and $(\CA'', \nu^*)$ belong to $\CIFM$, and $\exp (\CA'', \nu^*) \subseteq \exp (\CA', \nu')$, 
then $(\CA, \nu)$ also belongs to $\CIFM$.
\end{itemize}
\end{defn}

\begin{remark}[{\cite[Rem.\ 0.10]{abeteraowakefield:euler}}]
\label{rem:indfree}
The intersection of $\CIFM$ with the class of 
simple arrangements is the class $\CIF$ of inductively free arrangements.
\end{remark}

\begin{remark}
\label{rem:rank2}
As in the simple case, if $r(\CA) \le 2$,
then $(\CA, \nu)$  is inductively free,  
\cite[Cor.~7]{ziegler:multiarrangements}.
\end{remark}

\begin{remark}
\label{rem:indtable}
In analogy to the simple case, 
cf.~\cite[\S 4.3, p.~119]{orlikterao:arrangements}, 
\cite[Rem.\ 2.9]{hogeroehrle:indfree}, 
it is possible to describe an 
inductively free multiarrangement $(\CA, \nu)$
by means of a so called 
\emph{induction table}. 
In this process we start with an inductively free multiarrangement
(frequently $\Phi_\ell$) and add hyperplanes successively ensuring that 
part (ii) of Definition \ref{def:indfree} is satisfied.
We refer to this process as \emph{induction of hyperplanes}.
This procedure amounts to 
choosing a total order on the multiset $(\CA, \nu)$, say 
$\CA = \{H_1, \ldots, H_n\}$, where $n = |\nu|$, 
so that each of the submultiarrangements 
$\CA_0 := \Phi_\ell$, 
$(\CA_i, \nu_i) := \{H_1, \ldots, H_i\}$
(viewed again as multiset)
and each of the restrictions $(\CA_i^{H_i}, \nu_i^*)$ 
is inductively free for $i = 1, \ldots, n$.
As in the simple case, 
in the associated induction table 
we record in the $i$-th row the information 
of the $i$-th step of this process, by 
listing $\exp (\CA_i',\nu_i') = \exp (\CA_{i-1},\nu_{i-1})$, 
the defining form $\alpha_{H_i}$ of $H_i$, 
as well as $\exp (\CA_i'', \nu_i^*) = \exp (\CA_i^{H_i}, \nu_i^*)$, 
for $i = 1, \ldots, n$.
Frequently, we refer to a 
triple $(\CA_i,\nu_i)$,  $(\CA_{i-1},\nu_{i-1})$, and  $(\CA_i^{H_i}, \nu_i^*)$ 
in such an induction table  
as an \emph{inductive triple}.
In addition we also record the Euler multiplicity and 
in part the relevant data from Proposition \ref{prop:Euler}.  
For instance, see Tables 
\ref{indtable1} up to \ref{indtable7} below. 
\end{remark}

We also require the 
following result from 
\cite[Thm.\ 1.4]{hogeroehrleschauenburg:free};
this extends the compatibility of freeness with products
from Remark \ref{rem:freeproducts} to inductive freeness.

\begin{theorem}
\label{thm:products}
A product of multiarrangements belongs to 
$\CIFM$ if and only if 
each factor belongs to $\CIFM$.
\end{theorem}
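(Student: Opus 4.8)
The plan is to reduce the whole statement to a single compatibility property of the triple of Definition~\ref{def:Euler} with respect to a hyperplane coming from one factor, and then run one lexicographic induction on $(\dim V, |\nu|)$ to get both implications at once. Write the product as $(\CB,\nu) = (\CB_1,\nu_1)\times(\CB_2,\nu_2)$ inside $V = V_1\oplus V_2$ with $\ell_i = \dim V_i$; every hyperplane of $\CB$ is either $H\oplus V_2$ for a unique $H\in\CB_1$ or $V_1\oplus K$ for a unique $K\in\CB_2$, and $\nu$ restricts to $\nu_1,\nu_2$ on the two types. By the symmetry of the roles of $\CB_1$ and $\CB_2$ it suffices to treat a hyperplane $H_0 = H\oplus V_2$ from the first factor. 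The statement I would isolate is the following \emph{Key Lemma (triple compatibility)}: if $(\CB_1,\nu_1),(\CB_1',\nu_1'),(\CB_1^H,\nu_1^*)$ is the triple of $(\CB_1,\nu_1)$ with respect to $H$, then the triple of $(\CB,\nu)$ with respect to $H_0$ is
\[
(\CB',\nu') = (\CB_1',\nu_1')\times(\CB_2,\nu_2),\qquad
(\CB'',\nu^*) = (\CB_1^H,\nu_1^*)\times(\CB_2,\nu_2).
\]
The deletion part is immediate from Definition~\ref{def:Euler}, since lowering the multiplicity at $H_0$ (or removing it) is literally the deletion in the first factor. As an arrangement the restriction $\CB^{H_0}$ already splits: a first-factor hyperplane $L\oplus V_2$ meets $H_0$ in $(L\cap H)\oplus V_2$, giving $\CB_1^H$, while a second-factor hyperplane $V_1\oplus K$ meets $H_0$ in $H\oplus K$, giving a copy of $\CB_2$.

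The crux — and the step I expect to be the main obstacle — is that the Euler multiplicity $\nu^*$ respects this decomposition. For a cross term $Y = H\oplus K$ the localization is the rank-two arrangement $\CB_Y = \{H_0,\, V_1\oplus K\}$, so Proposition~\ref{prop:Euler}(2) gives $\nu^*(Y) = \nu(V_1\oplus K) = \nu_2(K)$, the multiplicity carried by the $\CB_2$-factor. For a first-factor term $Y = (L\cap H)\oplus V_2$ one checks that $Y$ lies only in first-factor hyperplanes, whence $\CB_Y = (\CB_1)_{L\cap H}\oplus V_2$ carries no $V_2$-direction in its essential rank-two part. By the product decomposition of derivation modules \cite[Lem.\ 1.3]{abeteraowakefield:euler}, the $V_2$-directions contribute only coordinate derivations to the distinguished basis $\{\theta_Y,\psi_Y,D_3,\ldots,D_\ell\}$ of Definition~\ref{def:Euler}; thus the generator $\theta_Y\notin\alpha_0\Der(S)$ and its polynomial degree are computed entirely inside the first factor, giving $\nu^*(Y) = \pdeg\theta_Y = \nu_1^*(L\cap H)$. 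This identifies $(\CB'',\nu^*)$ with $(\CB_1^H,\nu_1^*)\times(\CB_2,\nu_2)$ and proves the Key Lemma.

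With the Key Lemma in hand, the backward implication goes by induction on $(\dim V, |\nu|)$ ordered lexicographically, the base case being $\Phi_\ell = \Phi_{\ell_1}\times\Phi_{\ell_2}\in\CIFM$. Assume both factors lie in $\CIFM$; if both are empty we are done, so say $\CB_1\ne\Phi_{\ell_1}$, and let Definition~\ref{def:indfree}(ii) supply $H\in\CB_1$ with $(\CB_1',\nu_1'),(\CB_1^H,\nu_1^*)\in\CIFM$ and $\exp(\CB_1^H,\nu_1^*)\subseteq\exp(\CB_1',\nu_1')$. Applying the Key Lemma to $H_0 = H\oplus V_2$ exhibits the deletion and restriction of $(\CB,\nu)$ as products of $\CIFM$-factors of strictly smaller rank or order, hence in $\CIFM$ by the inductive hypothesis. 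Since inductive freeness implies freeness, Remark~\ref{rem:freeproducts} gives $\exp(\CB'',\nu^*) = \exp(\CB_1^H,\nu_1^*)\sqcup\exp(\CB_2,\nu_2)$ and $\exp(\CB',\nu') = \exp(\CB_1',\nu_1')\sqcup\exp(\CB_2,\nu_2)$, so the required containment $\exp(\CB'',\nu^*)\subseteq\exp(\CB',\nu')$ is inherited, and $(\CB,\nu)\in\CIFM$.

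For the forward implication I run the same induction. Assume $(\CB,\nu)\in\CIFM$ is nonempty and fix a witnessing hyperplane $H_0$; by symmetry $H_0 = H\oplus V_2$ for some $H\in\CB_1$. By the Key Lemma the deletion and restriction are $(\CB_1',\nu_1')\times(\CB_2,\nu_2)$ and $(\CB_1^H,\nu_1^*)\times(\CB_2,\nu_2)$, both in $\CIFM$ and of strictly smaller $(\dim V,|\nu|)$, so the inductive hypothesis forces each of $(\CB_1',\nu_1')$, $(\CB_1^H,\nu_1^*)$ and $(\CB_2,\nu_2)$ into $\CIFM$. Finally, combining the exponent containment $\exp(\CB'',\nu^*)\subseteq\exp(\CB',\nu')$ with the product splittings above and the cancellation property of multiset inclusion (from $A\sqcup C\subseteq B\sqcup C$ one gets $A\subseteq B$) yields $\exp(\CB_1^H,\nu_1^*)\subseteq\exp(\CB_1',\nu_1')$. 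Thus $H$ witnesses inductive freeness of $(\CB_1,\nu_1)$ via Definition~\ref{def:indfree}(ii), and together with $(\CB_2,\nu_2)\in\CIFM$ this closes the induction and establishes the theorem.
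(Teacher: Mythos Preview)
The paper does not actually prove this theorem: it is quoted as \cite[Thm.~1.4]{hogeroehrleschauenburg:free} and used as a black box. The only thing the present paper contributes towards it is the remark immediately following the statement, namely that localization, and hence the Euler multiplicity, is compatible with the product construction. That remark is precisely your Key Lemma, so your argument is in line with what the authors sketch (and presumably with the argument in \cite{hogeroehrleschauenburg:free}).

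Your proof is correct. A couple of minor comments on presentation. First, in the Key Lemma for a first-factor flat $Y=X\oplus V_2$, you can make the Euler-multiplicity step completely formal: since no hyperplane $V_1\oplus K$ contains $Y$, the localization $(\CB_Y,\nu_Y)$ equals $((\CB_1)_X,(\nu_1)_X)\times\Phi_{\ell_2}$ as a multiarrangement in $V$, and then \cite[Lem.~1.3]{abeteraowakefield:euler} identifies $D(\CB_Y,\nu_Y)$ with $D((\CB_1)_X,(\nu_1)_X)\otimes_{S_1}S$ direct sum the free module on the $V_2$-coordinate derivations. In particular the nontrivial pair $(\theta_Y,\psi_Y)$ of the distinguished basis already lives in the first factor and $\nu^*(Y)=\nu_1^*(X)$; the paper's remark says exactly this in one sentence. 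Second, your lexicographic induction on $(\dim V,|\nu|)$ is the clean way to run both implications, and the multiset cancellation step in the forward direction is sound because $\exp(\CB_2,\nu_2)$ is a common summand of both sides.
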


\begin{remark}
Since localization is compatible with the product 
construction, it follows 
from the definition of the Euler multiplicity that  
it is also compatible with this product construction.
In particular, the Euler multiplicity of the 
restricion of a product to a hyperplane only depends
on the relevant factor.
We use this fact throughout without further comment.
\end{remark}

\section{Proof of Theorem \ref{thm:multibraid}}
\label{sec:proof}

In order to prove Theorem \ref{thm:multibraid}, 
we perform an induction of hyperplanes, 
see Remark \ref{rem:indtable}.
By \cite[Prop.\ 6.73]{orlikterao:arrangements},
every restricted arrangement $\CA''$ is of Coxeter type $A$ again.
However, calculating the corresponding Euler multiplicities of these 
restrictions, we see that we do not always get a constant multiplicity. 
If $(\CA,\nu) = (\CA,m)$ has a constant multiplicity of weight $m$, then 
during the induction of hyperplanes, 
$(\CA'',\nu^*)$ has multiplicity 
given by the following 
defining polynomial
\[
Q(\CA_{\ell-2};m,q):=
\prod\limits_{1<j\leq \ell-1}(x_1-x_j)^{m+q}\prod\limits_{2 \le i<j \leq \ell-1}(x_i-x_j)^m
\]
for some non-negative integer $q$. 

If $q=0$ (i.e.\ when $\nu^*$ is a constant multiplicity), 
then the exponents are given by Theorem \ref{thm:multibraid}.
In any case, irrespective of being able to determine 
the exponents in our induction, we do not know a priori 
whether or not the restricted multiarrangements
that occur are inductively free. 
In this context, the next result is very useful. 
It states that such arrangements with described 
multiplicities are indeed inductively free assuming 
Theorem \ref{thm:multibraid} holds.

\begin{lemma}
\label{lem:mixed}
Let $\CA$ be the Coxeter 
arrangement of type $A_{\ell-1}$
and let $\nu \colon\CA \to \mathbb Z_{\geq 0}$ be a constant multiplicity
of weight $m$.
Suppose that the multiarrangement $(\CA,\nu)$ is inductively free. 
Then, for any $q\in\mathbb Z_{\geq 0}$, 
the multiarrangement $(\CA;m,q)$ with defining polynomial
\[
Q(\CA;m,q) :=\prod\limits_{2 \le j\leq \ell}(x_1-x_j)^{m+q}
\prod\limits_{2 \le i<j \leq \ell}(x_i-x_j)^m
\]
is inductively free with
\[
\exp(\CA;m,q)=\left\{\tfrac{m \ell}{2}+q,\ldots,\tfrac{m \ell}{2}+q\right\}
\]
when $m$ is even, respectively 
\[
\exp(\CA;m,q)=\left\{\tfrac{(m-1)\ell}{2}+1+q, \tfrac{(m-1)\ell}{2}+2+q,
\ldots,\tfrac{(m-1)\ell}{2}+\ell-1+q\right\}
\]
when $m$ is odd.
\end{lemma}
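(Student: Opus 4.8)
The plan is to prove Lemma \ref{lem:mixed} by induction on $q$, using Theorem \ref{thm:multibraid} as the base case $q = 0$. So assume $(\CA; m, q-1)$ is inductively free with the asserted exponents; I want to add $\ell-1$ copies of the hyperplane $H_0 = \ker(x_1 - x_2)$ one at a time to pass from multiplicity $m+q-1$ on the hyperplanes through the first coordinate to multiplicity $m+q$. Actually, it is cleaner to add the extra copies on \emph{all} the hyperplanes $\ker(x_1 - x_j)$, $2 \le j \le \ell$, simultaneously in the sense of a single run of the induction table, but since induction of hyperplanes adds one hyperplane at a time, I will order the additions: first raise $\ker(x_1-x_2)$ from $m+q-1$ to $m+q$, then $\ker(x_1-x_3)$, and so on. The key point is to control, at each step, the deletion $(\CA', \nu')$ (which is inductively free by the inductive hypothesis on $q$ together with the earlier steps in the current round), and the restriction $(\CA'', \nu^*)$.

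First I would fix $H_0 = \ker(x_1 - x_j)$ for the current step and compute the Euler multiplicity $\nu^*$ on $\CA'' = \CA^{H_0}$. By \cite[Prop.\ 6.73]{orlikterao:arrangements} the restriction is again a Coxeter arrangement of type $A_{\ell-2}$; its hyperplanes are the images $\ker(x_i - x_k) \cap H_0$. For a point $X \in \CA''$ I need to know $|\CA_X|$ (either $2$ or $3$, since the type $A$ rank-$2$ localizations are $A_1 \times A_1$ or $A_2$) and the relevant multiplicities, then apply Proposition \ref{prop:Euler}. The outcome, which I would verify case by case, is that $\nu^*$ is exactly a multiplicity of the form $Q(\CA_{\ell-2}; m, q')$ appearing in Section \ref{sec:proof}: on the hyperplanes through the first coordinate it is $m + q'$ for an appropriate $q' \ge 1$ coming from the fact that $H_0$ itself carried a large multiplicity, and $m$ elsewhere. (When $m$ is even one must be a little careful with floors; the hypotheses $2\nu_0 \le |\nu_X|$ etc.\ in Proposition \ref{prop:Euler}(1) need checking, but they hold once $q \ge 1$.)

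Now comes the circularity that must be broken: to apply Definition \ref{def:indfree} I need $(\CA'', \nu^*)$ to be inductively free, and $(\CA'', \nu^*)$ is a type $A_{\ell-2}$ arrangement with a mixed multiplicity of exactly the kind the lemma is about — one dimension down. So the real structure of the argument is a \emph{double induction}: the outer induction is on $\ell$, and within fixed $\ell$ one inducts on $q$. For the outer base $\ell$ small (say $\ell \le 3$) everything is inductively free by Remark \ref{rem:rank2}. For the inductive step on $\ell$, I may assume Lemma \ref{lem:mixed} already holds in dimensions $< \ell$ (for \emph{all} $q$ and all even/odd $m$), which is exactly what is needed to conclude that the restriction $(\CA'', \nu^*)$ — being a type $A_{\ell-2}$ mixed multiarrangement — is inductively free. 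Then, with $(\CA', \nu')$ inductively free (inner induction on $q$, plus the partially completed current round) and $(\CA'', \nu^*)$ inductively free, I check the exponent containment $\exp(\CA'', \nu^*) \subseteq \exp(\CA', \nu')$ using the explicit exponent formulas (the even and odd cases are handled separately; in the odd case the exponents are the shifted consecutive integers $\tfrac{(m-1)\ell}{2}+1+q, \ldots$, and one verifies the restriction's $\ell-2$ exponents sit inside the deletion's $\ell-1$ exponents after removing the correct one), and Definition \ref{def:indfree}(ii) then gives that the arrangement after adding the hyperplane is in $\CIFM$, with exponents obtained from those of $(\CA', \nu')$ by incrementing one of them. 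Iterating over $j = 2, \ldots, \ell$ completes the round and yields $(\CA; m, q)$ with the claimed exponents.

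The main obstacle, and the step I would spend the most care on, is the Euler-multiplicity bookkeeping: showing that at every step of the induction table the restriction $(\CA'', \nu^*)$ is \emph{precisely} of the mixed form $Q(\CA_{\ell-2}; m, q')$ with the right $q'$, so that the outer induction on $\ell$ can be invoked, and simultaneously that the exponent containment holds at every step. This is a finite but somewhat intricate case analysis over the rank-$2$ localizations (distinguishing whether $X$ lies on $H_0$'s "side" or not, and whether the local type is $A_1 \times A_1$ or $A_2$), and it must be organized so that the parity of $m$ and the value of $q$ are tracked consistently through Proposition \ref{prop:Euler}. Once that is pinned down, assembling the induction table and reading off the exponents is routine.
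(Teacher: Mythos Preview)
Your approach is essentially the paper's: induction on $\ell$, and within each $\ell$ an induction table that raises the multiplicities on the hyperplanes $\ker(x_1 - x_j)$ from $m$ to $m+q$ in $q$ rounds, starting from $(\CA, m)$. Two corrections are needed. First, the base case $q = 0$ should invoke the \emph{hypothesis} of the lemma (that $(\CA, m)$ is inductively free) together with Terao's exponent formula \cite{terao:multi}, not Theorem~\ref{thm:multibraid}, which the lemma is used to prove; this is a minor attribution slip.

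Second, and more substantively, your claim that the restriction $(\CA'', \nu^*)$ at each step is \emph{precisely} of the form $(\CA_{\ell-2}; m, q')$ fails at the intermediate steps within a round. When you restrict to $\ker(x_1 - x_j)$ after having already raised $\ker(x_1 - x_2), \ldots, \ker(x_1 - x_{j-1})$ in the current round, the Euler multiplicity on $Y^j_a := \{x_1 = x_j = x_a\}$ is one higher for $a < j$ than for $a > j$ (apply Proposition~\ref{prop:Euler}(1): the two localizations have $|\nu_X|$ differing by $1$), so the restriction's multiplicity has the shape $[p{+}1, \ldots, p{+}1, p, \ldots, p, m, \ldots, m]$ rather than the pure mixed form. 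The paper's resolution is to observe that this restriction is itself an intermediate multiarrangement in the lower-rank induction table, occurring between $(\CA_{\ell-2}; m, q')$ and $(\CA_{\ell-2}; m, q'{+}1)$ for the appropriate $q'$; since the outer induction on $\ell$ furnishes the \emph{entire} chain at rank $\ell - 1$, not just its endpoints, these intermediate arrangements are already known to be inductively free with explicit exponents (see Table~\ref{indtable2}). With this adjustment the exponent containment holds at every step and your argument goes through.
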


\begin{proof} 
Let $\CA_{\ell-1}$ be
the Coxeter arrangement of type $A_{\ell-1}$. 
We argue by induction on $\ell$. 
For $\ell=2$, it follows from 
Remark \ref{rem:rank2}
that $(\CA_1;m,q)$
is inductively free. We have
\[
Q(\CA_1;m,q)=(x_1 - x_2)^{m+q}.
\]
This is a Coxeter arrangement of type $A_1$ with 
a constant multiplicity $m+q$ and so, 
its set of exponents is $\{m+q\}$, as given by 
Theorem \ref{thm:multibraid}, thanks to \cite[Thm.\ 1.1]{terao:multi}.
Note that this does not 
depend on the parity of $m+q$.
So the result follows for $\ell = 2$.
 
Strictly speaking, the case $\ell = 3$ is not 
necessary in our induction. It is however very instructive 
to see the arguments in this case, as this is an  
instance of a non-constant multiplicity.

For $\ell = 3$ we see again by Remark \ref{rem:rank2} that
$(\CA_2;m,q)$ is inductively free. 
But this time we do not have a constant multiplicity. 
Here the multiplicity is given by $[m+q,m+q,m]$. 
Therefore, we perform an induction of hyperplanes, 
starting with the case in which $m$ is even (including $m=0$). 
By assumption of the lemma, the multiarrangement 
$(\CA_2,\nu) = (\CA_2,m)$ is 
inductively free. 
Therefore, we may initialize the induction table with the 
multiarrangement 
$(\CA_2,m)$ with
\[
\exp(\CA_2,m)=\left\{\tfrac{3m}{2},\tfrac{3m}{2}\right\},
\]
where the exponents are again given by \cite[Thm.\ 1.1]{terao:multi}.
Our aim is to add the hyperplanes of type 
$\ker(x_1-x_j)$ ($j=2,3$) $q$ times successively. 
In this $3$-dimensional case, determining the 
restriction in each step is very simple. 
We have $\CA''=\{x_1=x_2=x_3\}$ (except for 
the first step of the case $m=0$ where 
$\CA''=\Phi_2$). 
The Euler multiplicity can easily be calculated using 
Proposition \ref{prop:Euler}(1), 
because we have $k=3$ in every step. 
The resulting multiarrangement $(\CA'',\nu^*)$ 
is always inductively free because it has rank $1$. 
Since $\CA''\cong A_1$   
is again a Coxeter arrangement of type $A_1$ with 
a constant multiplicity $|\nu^*|$, 
its exponents are as given by 
Theorem \ref{thm:multibraid}.
In the first step we obtain 
$\nu^*(X)=\left\lfloor\frac{3m+1}{2}\right\rfloor$,
so the multiplicity of the single hyperplane 
is ${3m}/{2}$ and so
$\exp (\CA'',\nu^*) = \{{3m}/{2}\}$.
Applying Theorem \ref{thm:add-del},
we can easily determine the exponents of the 
new multiarrangement in every step, see Table \ref{indtable1}.

\begin{table}[ht!b]\small
\renewcommand{\arraystretch}{1.5}
\begin{tabular}{llll}
  \hline
$\exp (\CA',\nu')$ & $\alpha_H$ & $\exp (\CA'', \nu^*)$ & Euler multiplicities\\
  \hline
  \hline
  $\frac{3m}{2}, \frac{3m}{2}$ & $x_1-x_2$ & $\frac{3m}{2}$ & $\nu_0=m+1, \nu_1=m$,\\
         &  &  & $|\nu_X|=3m+1, \nu^*(X)=\left\lfloor\frac{3m+1}{2}\right\rfloor$ \\
  \hline
  $\frac{3m}{2},\frac{3m}{2}+1$ & $x_1-x_3$ & $\frac{3m}{2}+1$ & $\nu_0=\nu_1=m+1$,\\
      &  & & $|\nu_X|=3m+2, \nu^*(X)=\left\lfloor\frac{3m+2}{2}\right\rfloor$   \\
  \hline
$\frac{3m}{2}+1,\frac{3m}{2}+1$ & $x_1-x_2$ & $\frac{3m}{2}+1$ & $\nu_0=m+2, \nu_1=m+1$,\\
  &  &  & $|\nu_X|=3m+3, \nu^*(X)=\left\lfloor\frac{3m+3}{2}\right\rfloor$   \\
  \hline
  $\frac{3m}{2}+1,\frac{3m}{2}+2$ & $x_1-x_3$  & $\frac{3m}{2}+2$ & $\nu_0=\nu_1=m+2$,                                              \\
  & & & $|\nu_X|=3m+4, \nu^*(X)=\left\lfloor\frac{3m+4}{2}\right\rfloor$   \\
  \hline
  $\vdots$      & $\vdots$   & $\vdots$                 & $\vdots$  \\
  \hline
  $\frac{3m}{2}+q-1,\frac{3m}{2}+q-1$ & $x_1-x_2$  & $\frac{3m}{2}+q-1$       & $\nu_0=m+q, \nu_1=m+q-1$,  \\
   &  &    & $|\nu_X|=3m+2q-1$,  $\nu^*(X)=\left\lfloor\frac{3m+2q-1}{2}\right\rfloor$            \\ 
  \hline
  $\frac{3m}{2}+q-1,\frac{3m}{2}+q$   & $x_1-x_3$  & $\frac{3m}{2}+q$         & $\nu_0=\nu_1=m+q$,                                              \\
      &  &  & $|\nu_X|=3m+2q, \nu^*(X)=\left\lfloor\frac{3m+2q}{2}\right\rfloor$ \\
  \hline
  $\frac{3m}{2}+q,\frac{3m}{2}+q$                 \\                            
\hline
\end{tabular}
\medskip
\caption{Lemma \ref{lem:mixed}: 
Induction of hyperplanes for  $\ell=3$ and $m$ even}
\label{indtable1} 
\end{table}

The case where $m$ is odd is treated in a similar way. Starting with
\[
\exp(\CA_2,m)=\left\{\tfrac{3(m-1)}{2}+1,\tfrac{3(m-1)}{2}+2\right\}
\]
we add the hyperplanes of 
type $\ker(x_1-x_j)$ ($j=2,3$) $q$ times successively until we get
\[
\exp(\CA_2;m,q)=\left\{\tfrac{3(m-1)}{2}+q+1,\tfrac{3(m-1)}{2}+q+2\right\}.
\]

Now suppose that $m$ is even, $\ell>3$, and that the 
statement of the lemma holds for all values of $q$ for smaller ranks.
By hypothesis of the lemma, the 
multiarrangement $(\CA_{\ell-1},m)$ 
is inductively free with exponents  
\[
\exp(\CA_{\ell-1},m)=\left\{\tfrac{m \ell}{2},\ldots,\tfrac{m \ell}{2}\right\}.
\]
We initialize our induction table with this inductively free 
multiarrangement.
Then 
we consider the restriction to $\ker(x_1-x_j)$ without loss, 
so that  
\[
\CA''=\{x_1=x_j=x_a;x_1=x_j,x_b=x_c\}\cong\CA_{\ell-2},
\]
where $1 < j\leq \ell$ and $a,b,c\neq 1,j$ and $b \ne c$. 
We denote the members of $\CA''$ by
$Y^{j}_a:=\{x_1=x_j=x_a\}$ and 
$Y^{j}_{b,c}:=\{x_1=x_j,x_b=x_c\}$.
In the first step we restrict to $\ker(x_1-x_2)$ 
and calculate the Euler multiplicities 
using Proposition \ref{prop:Euler}(1) and (2) as follows: 
for $Y^2_a$ we have
\[
Q(\CA_{Y^2_a},\nu_{Y^2_a})=(x_1-x_2)^{m+1}(x_1-x_a)^m(x_2-x_a)^m.
\]
Hence, by Proposition \ref{prop:Euler}(1),
\[
\nu^*(Y^2_a)=\left\lfloor\tfrac{3m+1}{2}\right\rfloor=\tfrac{3m}{2}.
\]
For $Y^2_{b,c}$ we have
\[
Q(\CA_{Y^2_{b,c}},\nu_{Y^2_{b,c}})=(x_1-x_2)^{m+1}(x_b-x_c)^m.
\]
Consequently, by Proposition \ref{prop:Euler}(2), we get 
$\nu^*(Y^2_{b,c})=m$. 
Therefore, the Euler multiplicity $\nu^*$ on $\CA''$ is given by 
\[
\left[\tfrac{3m}{2},\ldots,\tfrac{3m}{2},m,\ldots,m\right] = 
\left[m+\tfrac{m}{2},\ldots, m+\tfrac{m}{2},m,\ldots,m\right].
\]
Due to our induction hypothesis 
and the fact that $\CA''\cong\CA_{\ell-2}$, 
the resulting multiarrangement $(\CA'',\nu^*)$ is inductively free with
\[
\exp(\CA'',\nu^*) = 
\left\{\tfrac{m (\ell-1)}{2} + \tfrac{m}{2},\ldots,\tfrac{m (\ell-1)}{2} + \tfrac{m}{2}\right\} =
\left\{\tfrac{m \ell}{2},\ldots,\tfrac{m \ell}{2}\right\}
\]
(note here $q = m/2$).
By the addition part of Theorem \ref{thm:add-del}
and Definition \ref{def:indfree}, we see that
$(\CA',\nu')$ in the next step is inductively free with
\[
\exp(\CA',\nu')=
\left\{\tfrac{m \ell}{2},\ldots,\tfrac{m \ell}{2},\tfrac{m \ell}{2}+1\right\}.
\]
Restricting to $\ker(x_1-x_3)$ leads to 
a similar arrangement as in the previous step, 
but this time the multiplicity $\nu^*$ of $\CA''$ is given by 
\[
\left[\tfrac{3m}{2}+1,\tfrac{3m}{2},\ldots,\tfrac{3m}{2},m,\ldots,m\right].
\]

\begin{table}[ht!b]\small
\renewcommand{\arraystretch}{1.5}
\begin{tabular}{llll}
  \hline
$\exp (\CA',\nu')$ & $\alpha_H$ & $\exp (\CA'', \nu^*)$ and 
Euler multiplicity\\
  \hline
  \hline
  $\underbrace{\tfrac{m \ell}{2},\ldots,\tfrac{m \ell}{2}}_{(\ell-1)\text{ times}}$      
 & $x_1-x_2$ & $\underbrace{\tfrac{m \ell}{2},\ldots,\tfrac{m \ell}{2}}_{(\ell-2)\text{ times}}$ 
\ \ \ $\left[\tfrac{3m}{2},\ldots,\tfrac{3m}{2},m,\ldots,m\right]$ \\
  \hline
  $\frac{m \ell}{2},\ldots,\frac{m \ell}{2},\frac{m \ell}{2}+1$    
  & $x_1-x_3$ & $\frac{m \ell}{2},\ldots,\frac{m \ell}{2},\frac{m \ell}{2}+1$ \\         
     & & $\left[\frac{3m}{2}+1,\frac{3m}{2},\ldots,\frac{3m}{2},m,\ldots,m\right]$\\
    \hline
$\frac{m \ell}{2},\ldots,\frac{m \ell}{2},\frac{m \ell}{2}+1,\frac{m \ell}{2}+1$ 
& $x_1-x_4$ & $\frac{m \ell}{2},\ldots,\frac{m \ell}{2},\frac{m \ell}{2}+1,\frac{m \ell}{2}+1$ \\ 
 & & $\left[\frac{3m}{2}+1,\frac{3m}{2}+1,\frac{3m}{2},\ldots,\frac{3m}{2},m,\ldots,m\right]$ \\
    \hline
  $\vdots$ & $\vdots$ & $\vdots$   \\
      \hline
  $\frac{m \ell}{2},\frac{m \ell}{2}+1,\ldots,\frac{m \ell}{2}+1$    & $x_1-x_\ell$ & $\frac{m \ell}{2}+1,\ldots,\frac{m \ell}{2}+1$ \\
    & & $\left[\frac{3m}{2}+1,\ldots,\frac{3m}{2}+1,m,\ldots,m\right]$ \\
      \hline 
  $\frac{m \ell}{2}+1,\ldots,\frac{m \ell}{2}+1$  & $x_1-x_2$ & $\ldots$  \\
        \hline 
  $\vdots$ & $\vdots$ & $\vdots$                  \\
        \hline 
  $\frac{m \ell}{2}+q-1,\ldots,\frac{m \ell}{2}+q-1$        
     & $x_1-x_2$ &    $\frac{m \ell}{2}+q-1,\ldots,\frac{m \ell}{2}+q-1$ \\                         
      & & $\left[\frac{3m}{2}+q-1,\ldots,\frac{3m}{2}+q-1,m,\ldots,m\right]$ \\
         \hline 
  $\frac{m \ell}{2}+q-1,\ldots,\frac{m \ell}{2}+q-1, \frac{m \ell}{2}+q$      
      & $x_1-x_3$ &     $\frac{m \ell}{2}+q-1,\ldots,\frac{m \ell}{2}+q-1,\frac{m \ell}{2}+q$ \\
    &      &          $\left[\frac{3m}{2}+q,\frac{3m}{2}+q-1,\ldots,\frac{3m}{2}+q-1, m,\ldots,m\right]$ \\
          \hline 
  $\vdots$ & $\vdots$ & $\vdots$    \\    
          \hline 
  $\frac{m \ell}{2}+q-1,\frac{m \ell}{2}+q,\ldots, \frac{m \ell}{2}+q$   & $x_1-x_\ell$ &    
            $\frac{m \ell}{2}+q,\ldots,\frac{m \ell}{2}+q$ \\ 
  &  &   $\left[\frac{3m}{2}+q,\ldots,\frac{3m}{2}+q,m,\ldots,m\right]$ \\
          \hline 
  $\frac{m \ell}{2}+q,\ldots,\frac{m \ell}{2}+q$ & &   \\
\hline
\end{tabular}
\medskip
\caption{Lemma \ref{lem:mixed}: 
Induction of hyperplanes for  $\ell >3$ and $m$ even}
\label{indtable2} 
\end{table}

Obviously, such multiplicities occur 
while adding hyperplanes of the type 
$Y^3_a$ to the underlying multiarrangement 
with constant multiplicity $m$. In this case, 
the hyperplanes of type $Y^3_a$ have been 
added $\frac{m}{2}$ times except for 
$Y^3_2=\{x_1=x_2=x_3\}$ which already 
has multiplicity $\frac{m}{2}+1$ and we have
\[
\exp(\CA'',\nu^*)=\left\{\tfrac{m \ell}{2},\ldots,\tfrac{m \ell}{2},\tfrac{m \ell}{2}+1\right\}.
\]
Continuing in the same way, we can 
easily complete our induction of 
hyperlanes when $m$ is even, see Table \ref{indtable2}.

The case where $m$ is odd is again 
treated in an analogous way.
By hypothesis of the lemma, the 
multiarrangement $(\CA_{\ell-1},m)$ 
is inductively free with exponents  
\[
\exp(\CA_{\ell-1},m)=\left\{\tfrac{(m-1) \ell}{2}+1,\ldots,\tfrac{(m-1) \ell}{2}+\ell-1\right\}.
\]
We initialize our induction table with this inductively free 
multiarrangement.
The Euler multiplicities can then 
be calculated again using 
Proposition \ref{prop:Euler}(1) and (2),
see Table \ref{indtable3} for details.

\begin{table}[ht!b]\small
\renewcommand{\arraystretch}{1.5}
\begin{tabular}{llll}
  \hline
$\exp (\CA',\nu')$ & $\alpha_H$ & $\exp (\CA'', \nu^*)$ and 
Euler multiplicities\\
  \hline
  \hline
  $\frac{(m-1)\ell}{2}+1,\ldots,\frac{(m-1)\ell}{2}+\ell-1$  & $x_1-x_2$ & 
     $\frac{(m-1)\ell}{2}+2,\ldots,\frac{(m-1)\ell}{2}+\ell-1$ \\   
  &     & $\left[\frac{3m+1}{2},\ldots,\frac{3m+1}{2},m,\ldots,m\right]$ \\
  \hline
  $\frac{(m-1)\ell}{2}+2,\frac{(m-1)\ell}{2}+2$,      & $x_1-x_3$ &
     $\frac{(m-1)\ell}{2}+2,\ldots,\frac{(m-1)\ell}{2}+\ell-1$   \\ 
       $\frac{(m-1)\ell}{2}+3,\ldots,\frac{(m-1)\ell}{2}+\ell-1$         &     &
    $\left[\frac{3m+1}{2},\ldots,\frac{3m+1}{2},m,\ldots,m\right]$ \\
  \hline
  $\vdots$ & $\vdots$ & $\vdots$     \\
  \hline
  $\frac{(m-1)\ell}{2}+2,\ldots,\frac{(m-1)\ell}{2}+\ell-1, \frac{(m-1)\ell}{2}+\ell-1$ & $x_1-x_\ell$ & 
   $\frac{(m-1)\ell}{2}+2,\ldots,\frac{(m-1)\ell}{2}+\ell-1$ \\       
            &  &    $\left[\frac{3m+1}{2},\ldots,\frac{3m+1}{2},m,\ldots,m\right]$\\
  \hline
  $\frac{(m-1)\ell}{2}+2,\ldots,\frac{(m-1)\ell}{2}+\ell$  &  $x_1-x_2$  &  $\ldots$    \\
  \hline
  $\vdots$ & $\vdots$ & $\vdots$   \\
  \hline
  $\frac{(m-1)\ell}{2}+q,\ldots, \frac{(m-1)\ell}{2}+\ell-2+q$   & $x_1-x_2$ &  
   $\frac{(m-1)\ell}{2}+1+q,\ldots,\frac{(m-1)\ell}{2}+\ell-2+q$ \\        
       & & $\left[\frac{3m-1}{2}+q,\ldots,\frac{3m-1}{2}+q,m,\ldots,m\right]$ \\
  \hline
  $\frac{(m-1)\ell}{2}+1+q,\frac{(m-1)\ell}{2}+1+q,$   & $x_1-x_3$ &      
    $\frac{(m-1)\ell}{2}+1+q,\ldots,\frac{(m-1)\ell}{2}+\ell-2+q$ \\
  $\frac{(m-1)\ell}{2}+2+q,\ldots, \frac{(m-1)\ell}{2}+\ell-2+q$  & &
   $\left[\frac{3m-1}{2}+q,\ldots,\frac{3m-1}{2}+q,m,\ldots,m\right]$ \\
    \hline
  $\vdots$ & $\vdots$ & $\vdots$   \\    
    \hline
  $\frac{(m-1)\ell}{2}+1+q,\ldots,$   & $x_1-x_\ell$ & 
   $\frac{(m-1)\ell}{2}+1+q,\ldots,\frac{(m-1)\ell}{2}+\ell-2+q$ \\
  $\frac{(m-1)\ell}{2}+\ell-2+q, \frac{(m-1)\ell}{2}+\ell-2+q$    & &
  $\left[\frac{3m-1}{2}+q,\ldots,\frac{3m-1}{2}+q,m,\ldots,m\right]$ \\
    \hline
  $\frac{(m-1)\ell}{2}+1+q,\ldots, \frac{(m-1)\ell}{2}+\ell-1+q$   &   &  \\
\hline
\end{tabular}
\medskip
\caption{Lemma \ref{lem:mixed}: 
Induction of hyperplanes for  $\ell>3$ and $m$ odd}
\label{indtable3} 
\end{table}

This completes the proof of the lemma.
\end{proof}

\vfill
\eject

We prove Theorem \ref{thm:multibraid} 
by induction on the rank $\ell$. 
For $\ell=2$,  $\CA$ is a Coxeter arrangement of type $A_1$
and the multiarrangement 
$(\CA,m)$ is inductively free 
thanks to Remark \ref{rem:rank2},
with $ \exp (\CA,m) = \{m\}$, 
thanks to \cite[Thm.\ 1.1]{terao:multi}.

Now let $\ell = 3$. 
The underlying simple arrangement $\CA_2$ is inductively free 
due to Remark \ref{rem:rank2}
with $\exp(\CA_2)=\{1,2\}$. 
Thus we initialize our induction table 
with the simple inductively free arrangement $\CA_2 = (\CA_2, 1)$.
In our induction of hyperplanes, each of the three hyperplanes 
is added in turn until each has multiplicity $m$. 
Since in every step $\CA''\cong\CA_1$ 
is a Coxeter arrangement of type $A_1$ necessarily with 
a constant multiplicity, 
we readily obtain $\exp(\CA'',\nu^*)=\{|\nu^*|\}$, 
thanks to \cite[Thm.\ 1.1]{terao:multi}.
It is again very easy to determine 
the multiplicity $\nu^*$ at each step, using 
Proposition \ref{prop:Euler}(1); see
Table \ref{indtable4}.

\begin{table}[ht!b]\small
\renewcommand{\arraystretch}{1.5}
\begin{tabular}{llll}
  \hline
$\exp (\CA',\nu')$ & $\alpha_H$ & $\exp (\CA'', \nu^*)$ \\
  \hline
  \hline
  $1,2$                                      & $x_1-x_2$  & $2$                     \\
  $2,2$                                      & $x_1-x_3$  & $2$                     \\
  $2,3$                                      & $x_2-x_3$  & $3$                     \\
  $3,3 $                                     & $x_1-x_2$  & $3$                     \\
  $3,4$                                      & $x_1-x_3$  & $4$                     \\
  $4,4$                                      & $x_2-x_3$  & $4$                     \\ 
%  $4,5$                                      &          &                       \\
  $\vdots$                                   & $\vdots$   & $\vdots$                \\
  when $m$ is even:                &          &                       \\
  \hline
  $\frac{3m}{2}-2,\frac{3m}{2}-1$            & $x_1-x_2$  & $\frac{3m}{2}-1$        \\ 
  $\frac{3m}{2}-1,\frac{3m}{2}-1$            & $x_1-x_3$  & $\frac{3m}{2}-1$        \\
  $\frac{3m}{2}-1,\frac{3m}{2}$              & $x_2-x_3$  & $\frac{3m}{2}$          \\
  $\frac{3m}{2},\frac{3m}{2}$                &          &                       \\
  \hline
when $m$ is odd:                  &          &                       \\
  \hline
  $\frac{3(m-1)}{2},\frac{3(m-1)}{2}$        & $x_1-x_2$  & $\frac{3(m-1)}{2}$      \\
  $\frac{3(m-1)}{2},\frac{3(m-1)}{2}+1$      & $x_1-x_3$  & $\frac{3(m-1)}{2}+1$    \\
  $\frac{3(m-1)}{2}+1,\frac{3(m-1)}{2}+1$    & $x_2-x_3$  & $\frac{3(m-1)}{2}+1$    \\
  $\frac{3(m-1)}{2}+1,\frac{3(m-1)}{2}+2$    &          &                       \\
  \hline
\end{tabular}
\medskip
\caption{Theorem \ref{thm:multibraid}; induction of hyperplanes for $\ell=3$}
\label{indtable4} 
\end{table}

Now suppose that $\ell > 3$ and that the statement of the theorem holds for 
smaller ranks. In particular, the multiarrangement $(\CA_{\ell-2},\nu) = (\CA_{\ell-2},m)$ 
with constant multiplicity $m$ is inductively free.
By Theorem \ref{thm:products}, 
the multiarrangement $(\CA_{\ell-2},m)\times\Phi_1$ is inductively free as well. 
It has exponents $\{0,\exp(\CA_{\ell-2},m)\}$. 
In our induction of hyperplanes we now add the hyperplanes of 
type $\ker(x_i-x_\ell)$ (for $1\leq i<\ell$) $m$ times. 
The first $\frac{m}{2}$, respectively $\frac{m+1}{2}$ rounds adding 
those hyperplanes, the parity of $m$ does not matter.
In order to describe the restrictions we use the following notation.
We denote the members of $\CA''$ by
$Y^{j}_a:=\{x_j=x_\ell=x_a\}$ and 
$Y^{j}_{b,c}:=\{x_j=x_\ell,x_b=x_c\}$, 
where $1 \le j < \ell$ and $a,b,c\neq j, \ell$ and $b \ne c$.
Starting with $\ker(x_1-x_\ell)$ we have
\[
\CA''=\{x_1=x_\ell=x_a;x_1=x_\ell,x_b=x_c\}
\]
and of course $\CA''\cong\CA_{\ell-2}$. 
Using Proposition \ref{prop:Euler}(2), 
we get $\nu^*(Y^1_a)=\nu^*(Y^1_{b,c})=m$ 
which implies that $(\CA'',\nu^*)$ is a 
multiarrangement with a constant multiplicity $m$ and it is of Coxeter type 
$A_{\ell-2}$, hence it is inductively free due to our induction hypothesis. 
Continuing on, restricting to $\ker(x_i-x_\ell)$ for $2 \le i \le \ell-1$, 
we need to make use of Proposition \ref{prop:Euler}(2) and (3),
to derive that again $(\CA'',\nu^*)$ is a 
multiarrangement with a constant multiplicity $m$ and 
at each step $\CA''$ it is still of Coxeter type 
$A_{\ell-2}$, hence it is inductively free due to our induction hypothesis.
After the first round of adding hyperplanes the 
set of exponents of the new multiarrangement is 
$\{\ell-1, \exp(\CA_{\ell-2},m)\}$ and the multiplicity is
\[
[m,\ldots,m,\underbrace{1,\ldots,1}_{(\ell-1) \text{ times}}],
\]
because the hyperplanes $\ker(x_i-x_\ell)$ now have multiplicity $1$. 

\begin{table}[ht!b]\small
\renewcommand{\arraystretch}{1.5}
\begin{tabular}{llll}
  \hline
$\exp (\CA',\nu')$ & $\alpha_H$ & $\exp (\CA'', \nu^*)$ \\
  \hline
  \hline
  $0,\exp(\CA_{\ell-2},m)$ 	& $x_1-x_\ell$ & $\exp(\CA_{\ell-2},m)$ \\
%  \hline
  $1,\exp(\CA_{\ell-2},m)$ 	& $x_2-x_\ell$ & $\exp(\CA_{\ell-2},m)$ \\
%  \hline
  $2,\exp(\CA_{\ell-2},m)$  	& $x_3-x_\ell$ & $\exp(\CA_{\ell-2},m)$ \\
%  \hline
  $3,\exp(\CA_{\ell-2},m)$  	& $\ldots$   & $\ldots$		     \\
%  \hline
  $\vdots$  & $\vdots$  &   $\vdots$ \\
%  \hline
  $\ell-1,\exp(\CA_{\ell-2},m)$  	& $x_1-x_\ell$ & $\exp(\CA_{\ell-2},m)$ \\
%  \hline
  $\ell,\exp(\CA_{\ell-2},m)$   &  $\ldots$   & $\ldots$	    \\
%  \hline
  $\vdots$                                   & $\vdots$   & $\vdots$                \\
  when $m$ is even:        &          &                       \\
  \hline
  $\tfrac{m}{2}(\ell-1)-2,\exp(\CA_{\ell-2},m)$  	& $x_{\ell-2}-x_\ell$ & $\exp(\CA_{\ell-2},m)$ \\
  $\tfrac{m}{2}(\ell-1)-1,\exp(\CA_{\ell-2},m)$  	& $x_{\ell-1}-x_\ell$ & $\exp(\CA_{\ell-2},m)$ \\
  $\tfrac{m}{2}(\ell-1),\exp(\CA_{\ell-2},m)$                &          &                       \\
  \hline
when $m$ is odd:                  &          &                       \\
  \hline
  $\tfrac{m+1}{2}(\ell-1)-2,\exp(\CA_{\ell-2},m)$  	& $x_{\ell-2}-x_\ell$ & $\exp(\CA_{\ell-2},m)$ \\
  $\tfrac{m+1}{2}(\ell-1)-1,\exp(\CA_{\ell-2},m)$  	& $x_{\ell-1}-x_\ell$ & $\exp(\CA_{\ell-2},m)$ \\
  $\tfrac{m+1}{2}(\ell-1),\exp(\CA_{\ell-2},m)$                &          &                       \\
  \hline
\end{tabular}
\medskip  
\caption{Theorem \ref{thm:multibraid}; 
the first $\tfrac{m}{2}$ resp.~$\tfrac{m+1}{2}$ 
rounds of the induction of hyperplanes when $m$ 
is even resp.\ when $m$ is odd and $\ell>3$}
\label{indtable5}
\end{table}

In the subsequent rounds we always get the same restrictions and the 
Euler multiplicities are calculated using Proposition \ref{prop:Euler}(3). 
Let $r$ be the number of rounds of adding the 
hyperplanes $\ker(x_i-x_\ell)$ for $1 \le i \le \ell-1$. 
We consider the next
$\tfrac{m}{2}$ resp.~$\tfrac{m+1}{2}$ 
rounds of the induction of hyperplanes when $m$ 
is even resp.\ when $m$ is odd.
As long as 
$1\leq r\leq\frac{m}{2}$ in the case where $m$ 
is even and $1\leq r\leq\frac{m+1}{2}$ when $m$ is odd,
Proposition \ref{prop:Euler}(3) applies 
and gives the Euler multiplicities,
as shown in Table \ref{indtable5}.

Table \ref{indtable6} 
shows the final $\frac{m}{2}$ rounds in the case where $m$ is even. 
The initial inductively free arrangement $(\CA',\nu')$ here is 
the final arrangement from Table \ref{indtable5}, where 
$\frac{m}{2}$ rounds of adding the hyperplanes 
 $\ker(x_i-x_\ell)$ have already been performed.
Its defining polynomial is 
\[
Q(\CA',\nu') :=
\prod\limits_{1 \le i<j \leq \ell-1}(x_i-x_j)^m
\prod\limits_{1 \le j\leq \ell-1}(x_j-x_\ell)^{m/2}
\]
with set of exponents
\[
\exp(\CA',\nu')=\left\{\tfrac{m}{2}(\ell-1),\exp(\CA_{\ell-2},m)\right\}.
\]
Now we add $\ker(x_1-x_\ell)$ again and get
\[
Q(\CA_{Y^1_a},\nu_{Y^1_a})=(x_1-x_\ell)^{\frac{m}{2}+1}(x_1-x_a)^m(x_a-x_\ell)^{\frac{m}{2}}
\]
and hence $\nu^*(Y^1_a)=m$, by Proposition \ref{prop:Euler}(1).
Moreover, since 
\[
Q(\CA_{Y^1_{b,c}},\nu_{Y^1_{b,c}}) = (x_1-x_\ell)^{\frac{m}{2}+1}(x_b-x_c)^m
\]
and so $\nu^*(Y^1_{b,c})=m$, by Proposition \ref{prop:Euler}(2).
So once again we have $(\CA'',\nu^*) = (\CA_{\ell-2},m)$
with constant multiplicity $m$ again.
This is not any different from all the steps before. 
However, this round's second step, restricting to $\ker(x_2-x_\ell)$, 
leads to a different multiplicity.
For, here we have
\[
Q(\CA_{Y^2_1},\nu_{Y^2_1}) =
(x_1-x_2)^m (x_1-x_\ell)^{\frac{m}{2}+1}(x_2-x_\ell)^{\frac{m}{2}+1}.
\]
It follows from Proposition \ref{prop:Euler}(1)
that $\nu^*(Y^2_1) = m+1$.
One checks that the multiplicity in this case is given by $[m+1,m,\ldots,m]$. 
It follows from the proof of
Lemma \ref{lem:mixed} 
that this restriction $(\CA'', \nu^*)$ is also inductively free, 
because its multiplicity occurs in the 
induction of hyperplanes in the proof of the lemma,
see Table \ref{indtable2}.
Consequently, it has exponents
\[
\left\{\tfrac{m(\ell-1)}{2},\ldots,\tfrac{m(\ell-1)}{2},\tfrac{m(\ell-1)}{2}+1\right\}.
\]
Therefore, at the end of this round the restriction has multiplicity
\[
[\underbrace{m+1,\ldots,m+1}_{(\ell-2)\text{ times}},m,\ldots,m].
\]
Therefore, using our induction hypothesis that Theorem \ref{thm:multibraid} holds
for lower ranks and by Lemma \ref{lem:mixed} applied in case $\ell-1$, we have
\[
\exp(\CA',\nu')=\left\{\tfrac{m(\ell-1)}{2}+1,\ldots,\tfrac{m(\ell-1)}{2}+1\right\}.
\]
In the next round, the restriction's multiplicity 
builds up from $[m+1,\ldots,m+1,m,\ldots,m]$ 
over $[m+2,m+1,\ldots,m+1,m,\ldots,m]$ to 
$[m+2,\ldots,m+2,m,\ldots,m]$ and we can 
argue in the same way as in the preceding round,
using again the induction hypothesis on $\ell$, Lemma \ref{lem:mixed} for the restriction and
the addition part of Theorem \ref{thm:add-del}. 
The same applies for the remaining rounds as the multiplicity of the restriction increases to
\[
\left[\tfrac{3m}{2},\ldots,\tfrac{3m}{2},m,\ldots,m\right]
= \left[m+\tfrac{m}{2},\ldots,m+\tfrac{m}{2},m,\ldots,m\right]
\]
when we restrict to $\ker(x_{\ell-1}-x_\ell)$.
As before we use induction on $\ell$, Lemma \ref{lem:mixed} for the 
restriction and then the addition part of 
Theorem \ref{thm:add-del} again and obtain
the expected exponents $\left\{\frac{m\ell}{2},\ldots,\frac{m\ell}{2}\right\}$,
see Table \ref{indtable6}.

\begin{table}[ht!b]\small
\renewcommand{\arraystretch}{1.5}
\begin{tabular}{llll}
  \hline
$\exp (\CA',\nu')$ & $\alpha_H$ & $\exp (\CA'', \nu^*)$ \\
  \hline
  \hline
$  \frac{m}{2}(\ell-1),\exp(\CA_{\ell-2},m)$ & $x_1-x_\ell$ & 	$\exp(\CA_{\ell-2},m)$ \\
%  \hline
 $ \underbrace{\tfrac{m(\ell-1)}{2},\ldots,\tfrac{m(\ell-1)}{2},\tfrac{m(\ell-1)}{2}+1}_{(\ell-1)\text{ elements}}$ & $x_2-x_\ell$ & 
      	  $\underbrace{\tfrac{m(\ell-1)}{2},\ldots,\tfrac{m(\ell-1)}{2},\tfrac{m(\ell-1)}{2}+1}_{(\ell-2)\text{ elements}}$  \\
 % \hline
$\frac{m(\ell-1)}{2},\ldots,\frac{m(\ell-1)}{2},\frac{m(\ell-1)}{2}+1, \frac{m(\ell-1)}{2}+1$
   &  $\ldots$        & $\ldots$                                                    \\  
 % \hline
 $ \vdots $  	& $\vdots$  & $\vdots$    	             \\
%  \hline
  $\frac{m(\ell-1)}{2}+1,\ldots,\frac{m(\ell-1)}{2}+1$     & $x_1-x_\ell$ &     
 $\frac{m(\ell-1)}{2}+1,\ldots,\frac{m(\ell-1)}{2}+1$   \\
%  \hline
  $\frac{m(\ell-1)}{2}+1,\ldots,\frac{m(\ell-1)}{2}+1, \frac{m(\ell-1)}{2}+2$   & $\ldots$ & $\ldots$ \\
%  \hline
 $ \vdots $  	& $\vdots$  & $\vdots$    	             \\  
%  \hline
 $ \frac{m\ell}{2}-1,\frac{m\ell}{2},\ldots,\frac{m\ell}{2} $      & $x_{\ell-1}-x_\ell$ &     
         $\frac{m\ell}{2},\ldots,\frac{m\ell}{2}$  \\
%  \hline
  $\frac{m\ell}{2},\ldots,\frac{m\ell}{2}$                              \\
\hline
\end{tabular}
\medskip  
\caption{Theorem \ref{thm:multibraid}; induction of hyperplanes for $\ell>3$ and $m$ even}
\label{indtable6}
\end{table}

Table \ref{indtable7} shows the remaining $\frac{m-1}{2}$ rounds in the case where 
$m$ is odd. Comparing to the case where $m$ is even we see that the restriction's
 multiplicity does not change in the course of one round of adding hyperplanes. 
In the $\frac{m+3}{2}$-th round we have
\[
\left[m+1,\ldots,m+1,m,\ldots,m\right]
\]
and in the last round we have
\[
\left[\tfrac{3m-1}{2},\ldots,\tfrac{3m-1}{2},m,\ldots,m\right]
= \left[m+\tfrac{m-1}{2},\ldots,m+\tfrac{m-1}{2},m,\ldots,m\right]
\]
where these multiplicities can be calculated again using Proposition \ref{prop:Euler}(1). 

\[
\left\{\tfrac{(m-1)\ell}{2}+1,\ldots,\tfrac{(m-1)\ell}{2}+\ell-1\right\}.
\]
\begin{table}[ht!b]\small
\renewcommand{\arraystretch}{1.5}
\begin{tabular}{llll}
  \hline
$\exp (\CA',\nu')$ & $\alpha_H$ & $\exp (\CA'', \nu^*)$ \\
  \hline
  \hline
  $\frac{m-1}{2}(\ell-1),\exp(\CA_{\ell-2},m)$ 	& $x_1-x_\ell$ &	$\exp(\CA_{\ell-2},m)$ \\
  \hline
  $\frac{m-1}{2}(\ell-1)+1,\exp(\CA_{\ell-2},m)$    &  $\ldots$       & $\ldots$                           \\
  \hline
 $ \vdots $  	& $\vdots$  & $\vdots$    	             \\    
  \hline
  $\exp(\CA_{\ell-2},m),\frac{m-1}{2}(\ell-1)+\ell-1$    & $x_1-x_\ell$ &   
$\frac{(m-1)(\ell-1)}{2}+2,\frac{(m-1)(\ell-1)}{2}+3,$  \\
& & $\ldots,\frac{(m-1)(\ell-1)}{2}+\ell-1$   \\                                                                              
  \hline
  $\frac{(m-1)(\ell-1)}{2}+2,\frac{(m-1)(\ell-1)}{2}+2,$                            
  & $x_2-x_\ell$ &   $\frac{(m-1)(\ell-1)}{2}+2,\frac{(m-1)(\ell-1)}{2}+3,$   \\
  $\frac{(m-1)(\ell-1)}{2}+3,\ldots,\frac{(m-1)(\ell-1)}{2}+\ell-1$                   
   &         &                  $\ldots,\frac{(m-1)(\ell-1)}{2}+\ell-1$   \\
  \hline
  $\frac{(m-1)(\ell-1)}{2}+2,\frac{(m-1)(\ell-1)}{2}+3,$                            &  $\ldots$       & $\ldots$  \\
  $\frac{(m-1)(\ell-1)}{2}+3,\ldots,\frac{(m-1)(\ell-1)}{2}+\ell-1$     \\
  \hline
 $ \vdots $  	& $\vdots$  & $\vdots$    	             \\      
  \hline
  $\frac{(m-1)(\ell-1)}{2}+2,\ldots,\frac{(m-1)(\ell-1)}{2}+\ell$           & $x_1-x_\ell$ &         
         $\frac{(m-1)(\ell-1)}{2}+3,\ldots, \frac{(m-1)(\ell-1)}{2}+\ell$ \\
\hline
  $\frac{(m-1)(\ell-1)}{2}+3,\frac{(m-1)(\ell-1)}{2}+3,$                              & $x_2-x_\ell$ &                
     $\frac{(m-1)(\ell-1)}{2}+3,\ldots, \frac{(m-1)(\ell-1)}{2}+\ell$ \\
 $ \ldots,\frac{(m-1)(\ell-1)}{2}+\ell$                         & &  \\                                        
  \hline
  $\frac{(m-1)(\ell-1)}{2}+3,\frac{(m-1)(\ell-1)}{2}+4,$       &  $\ldots$       & $\ldots$      \\
  $\frac{(m-1)(\ell-1)}{2}+4,\ldots,\frac{(m-1)(\ell-1)}{2}+\ell$           \\ 
  \hline
 $ \vdots $  	& $\vdots$  & $\vdots$    	             \\      
  \hline
  $\frac{(m-1)\ell}{2},\frac{(m-1)\ell}{2}+1, \ldots,\frac{(m-1)\ell}{2}+\ell-2$ 
 & $x_1-x_\ell$ &    $\frac{(m-1)\ell}{2}+1,\ldots,\frac{(m-1)\ell}{2}+\ell-2$  \\
  \hline
  $\frac{(m-1)\ell}{2}+1,\frac{(m-1)\ell}{2}+1, \ldots,\frac{(m-1)\ell}{2}+\ell-2$    &  $\ldots$       & $\ldots$   \\
  \hline
 $ \vdots $  	& $\vdots$  & $\vdots$    	             \\        
  \hline
  $\frac{(m-1)\ell}{2}+1,\ldots,\frac{(m-1)\ell}{2}+\ell-2, \frac{(m-1)\ell}{2}+\ell-2$    
   & $x_{\ell-1}-x_\ell$ &  $\frac{(m-1)\ell}{2}+1,\ldots,\frac{(m-1)\ell}{2}+\ell-2$  \\
  \hline
  $\frac{(m-1)\ell}{2}+1,\ldots,\frac{(m-1)\ell}{2}+\ell-1$   &    &                          \\      
  \hline  
\end{tabular}
\medskip  
\caption{Theorem \ref{thm:multibraid}; induction of hyperplanes for $\ell>3$ and $m$ odd}
\label{indtable7}
\end{table}

Consequently, the restriction's exponents do not change during any of these rounds either. 
So there is only one element in the set $\exp(\CA',\nu')$ that increases by $1$ in 
every step.

As before, arguing by induction on $\ell$, 
employing Lemma \ref{lem:mixed} and the addition part of 
Theorem \ref{thm:add-del} in each round, 
we obtain the expected exponents
This completes the proof of Theorem \ref{thm:multibraid}.

Corollary \ref{cor:mixedmulti} follows from 
Theorem \ref{thm:multibraid} and 
Lemma \ref{lem:mixed}.

%%%%%%%%%%%%%%%%%%%%%%%%%%%%%%%%%%%%%%%%%%%%%%%%%%%%%%%%%%%%%%%%%%%%%%
%%%%%%%%%%%%% Acknowledgments
%%%%%%%%%%%%%%%%%%%%%%%%%%%%%%%%%%%%%%%%%%%%%%%%%%%%%%%%%%%%%%%%%%%%%%

\bigskip
{\bf Acknowledgments}:
We would like to thank the anonymous referee for making 
a number of useful comments and for bringing to our attention that our
result can in principle be recovered from \cite{abenudanumata}.

%%%%%%%%%%%%%%%%%%%%%%%%%%%%%%%%%%%%%%%%%%%%%%%%%%%%%%%%%%%%%%%%%%%%%%
%%%%%%%%%%%%% bibliography
%%%%%%%%%%%%%%%%%%%%%%%%%%%%%%%%%%%%%%%%%%%%%%%%%%%%%%%%%%%%%%%%%%%%%%

\bigskip

\bibliographystyle{amsalpha}

\newcommand{\etalchar}[1]{$^{#1}$}
\providecommand{\bysame}{\leavevmode\hbox to3em{\hrulefill}\thinspace}
\providecommand{\MR}{\relax\ifhmode\unskip\space\fi MR }
% \MRhref is called by the amsart/book/proc definition of \MR.
\providecommand{\MRhref}[2]{%
  \href{http://www.ams.org/mathscinet-getitem?mr=#1}{#2} }
\providecommand{\href}[2]{#2}

%%%%%%%%%%%%%%%%%%%%%%%%%%%%%%%%%%%%%%%%%%%%%%%%%%%%%%%%%%%%%%%%%%%%%%
%%%%%%%%%%%%%%%%%%%%%%%%%%%%%%%%%%%%%%%%%%%%%%%%%%%%%%%%%%%%%%%%%%%%%%

\end{document}